\theoremstyle{plain}
\newtheorem{theorem}{Theorem}[section]
\newtheorem*{theorem*}{Theorem}
\newtheorem{definition}[theorem]{Definition}
\newtheorem{lemma}[theorem]{Lemma}
\newtheorem*{prop*}{Proposition}
\newtheorem{cor}[theorem]{Corollary}
\newtheorem*{cor*}{Corollary}
\newtheorem{rem}[theorem]{Remark}
\newtheorem*{mt*}{Main Theorem}
\DeclareMathOperator{\Span}{Span}
\DeclareMathOperator{\id}{id}
\DeclareMathOperator{\Ker}{Ker}
\DeclareMathOperator{\im}{Im}
\DeclareMathSymbol{\Finv} {\mathord}{AMSb}{"60}
\newcommand\restrict[1]{\raisebox{-.5ex}{$|$}_{#1}}
\newcommand{\R}{\mathbb{R}}
\newcommand{\C}{\mathbb{C}}
\newcommand{\q}{\mathbb{Q}}
\newcommand{\del}{\partial}
\newcommand{\delbar}{\overline{\partial}}
\numberwithin{equation}{section}
\newcommand{\A}{\mathcal{A}}
\let\phi\varphi
\newcommand{\de}[2]{\frac{\partial #1}{\partial #2}}
\DeclareFontFamily{U}{MnSymbolC}{}
\DeclareSymbolFont{MnSyC}{U}{MnSymbolC}{m}{n}
\DeclareFontShape{U}{MnSymbolC}{m}{n}{
    <-6>  MnSymbolC5
   <6-7>  MnSymbolC6
   <7-8>  MnSymbolC7
   <8-9>  MnSymbolC8
   <9-10> MnSymbolC9
  <10-12> MnSymbolC10
  <12->   MnSymbolC12}{}
\DeclareMathSymbol{\intprod}{\mathbin}{MnSyC}{'270}
\author{Tommaso Sferruzza}
\address[Tommaso Sferruzza]{
Dipartimento di Scienze Matematiche, Fisiche e Informatiche\\
Unità di Mate\-matica e Informatica\\
Università degli studi di Parma}
\email{tommaso.sferruzza@unipr.it}
\title[Deformations of astheno-K\"ahler metrics]{Deformations of astheno-K\"ahler metrics}
\keywords{astheno-K\"ahler metric; Deformation of complex structure; Bott-Chern cohomology.}
\thanks{The author has been supported by GNSAGA of INdAM.}
\subjclass[2010]{53C55; 32C10}%{53C55, 53C44, 32J15, 57T10} %16E45
\date{\today}
\begin{document}
\begin{abstract}
The property of admitting an astheno-K\"ahler metric is not stable under the action of small deformations of the complex structure of a compact complex manifold. In this paper, we prove necessary cohomological conditions for the existence of curves of astheno-K\"ahler metrics along curves of deformations starting from an initial compact complex manifold endowed with an astheno-K\"ahler metric. Furthermore, we apply our results providing obstructions to the existence of curves of astheno-K\"ahler metrics on two different families of real $8$-dimensional nilmanifolds endowed with invariant nilpotent complex structures.
\end{abstract}

\maketitle

\section{Introduction}

The study of small deformations of the complex structure of a compact complex manifold yields many interesting insights on its homotopy and complex invariants. A remarkable result by Kodaira and Spencer in \cite {KS} is the proof of the stability under small deformations of the K\"ahler condition, i.e., the property of admitting a Hermitian metric whose fundamental form is closed. As a matter of fact, the K\"ahler condition is strongly related to the topology of a compact even dimensional manifold, e.g., it forces many costraints on the Betti numbers, that is, the dimensions of the de Rham cohomology spaces, and it implies the formality according to Sullivan, that is, the algebra of differential forms is equivalent to the algebra of its de Rham cohomology (\cite{Sul75,Sul77,DGMS}). Stability under deformation has been proved also for important invariants directly related to the complex structure of a manifold, e.g., the property of satisfying the $\del\delbar$-lemma, i.e., every $d$-closed $d$-exact form is also $\del\delbar$-exact, and different versions that have been recently introduced and studied (see \cite{AU,RWZ,SW}).

Concerning the stability of special Hermitian structures, i.e., Hermitian metrics whose fundamental forms belong to the kernel of differential operators depending on the complex structure of the manifold, the property of admitting a \emph{standard metric in the sense of Gauduchon} has been proved to be stable. In fact, by the celebrated Gauduchon theorem (see \cite{Gau}), given any Hermitian metric on a complex manifolds, its fundamental form is conformal to a standard metric in the sense of Gauduchon. However, this class of metrics represents a vary special case, since for many other notions of special metrics on a $n$-dimensional complex manifold, e.g.,
\begin{itemize}
\item \emph{strong K\"ahler with torsion} metrics, i.e., metrics whose fundamental form $\omega$ satisfies $\del\delbar\omega=0$,
\item \emph{astheno-K\"ahler} metrics, i.e., metrics whose fundamental form $\omega$ satisfies $\del\delbar\omega^{n-2}=0$,
\item \emph{astheno-K\"ahler} metrics, i.e, metrics whose fundamental form $\omega$ satisfies $d\omega^{n-1}=0$,
\end{itemize}
stability does not hold in general (see, respectively, \cite{FT09,AB90}). On the other hand, sufficient conditions for the stability of balanced and SKT metrics have been proved in \cite{AU,Ma,RZ18}. Hence, it is natural to investigate whether there exist assumptions on either the initial manifold or the performed deformation of the complex structure of a compact complex manifold so that the astheno-K\"ahler condition is stable.

The notion of astheno-K\"ahler metrics has been first introduced by Jost and Yau in the study of solutions to certain non linear elliptic systems \cite{JY} and has been later used to prove rigidity theorems regarding projectively flat manifolds in \cite{LYZ}. %Such metric structures can be defined as follows. Let $(M,J)$ be a $n$-dimensional complex manifold and $(g,\omega)$ a Hermitian structure on $(M,J)$, i.e., the datum of a Hermitian metric $g$ and its associated fundamental form $\omega=g(J\cdot,\cdot)$. Then, the metric $g$ is \emph{astheno-K\"ahler} if
%\[
%\del\delbar\omega^{n-2}=0.
%\]
By definition, it is clear that on a complex manifold of complex dimension lesser than $2$ every Hermitian metric is an astheno-K\"ahler metric. In complex dimension $3$, the notion of astheno-K\"ahler coincides with the notion of strong K\"ahler with torsion and, hence, the results valid for the latter class of metrics hold also for the former, e.g., on $6$-dimensional real nilmanifold endowed with an invariant complex structure, the existence of invariant SKT (and hence, astheno-K\"ahler) metrics depends only on the complex structure (see \cite{FPS}). Nonetheless, for $n>3$, this notions are not related, i.e., there exists examples of manifolds with astheno-K\"ahler metrics which are not strong K\"ahler with torsion, and viceversa (see \cite{FT11, RosTom}).

Furthermore, whereas for SKT metrics it has been conjectured that on the same non-K\"ahler compact complex manifold there cannot exist both SKT metrics and balanced metrics with respect to the same complex structure (see \cite{FV}), in \cite{MT} it has been proved that an astheno-K\"ahler  metric is balanced if and only if the metric is also K\"ahler. Nonetheless, in \cite{FGV} the authors show the existence of a compact complex non-K\"ahler manifold which admits both a balanced and astheno-K\"ahler metric. Moreover, the existence of SKT or balanced metrics is equivalent to the existence of, respectively, $1$-pluriclosed and $(n-1)$-K\"ahler structures on a manifold (see \cite{AA}), so that their existence on a compact manifolds can be intrinsically characterized in terms of currents (see \cite{E}), analogously to the K\"ahler case (see \cite{HL}). Even though for astheno-K\"ahler metrics such a characterization does not hold, the existence of an astheno-K\"ahler metric yet implies certain restraint on a manifold, i.e., the existence of a $(n-2)$-pluriclosed form and the closedness with respect to the exterior differentential of any holomorphic $1$-form. In \cite{FT11}, the authors study the behaviour of astheno-K\"ahler metrics under complex blowup and identify a sufficient differential condition for stability; in \cite{SfeTom2} it is shown that under weaker differential conditions, stability is not assured.

In this paper, following a similar approach to \cite{PS} and \cite{Sf}, we prove necessary conditions of cohomological type for the existence of a curve of astheno-K\"ahler metrics starting from a fixed astheno K\"ahler metric on a compact complex manifold. More specifically, if $e^{\iota_{\phi}|\iota_{\overline{\phi}}}$ denotes the extension map introduced in \cite{RZ18} and recalled in section \ref{sec:prel}, and $\iota_{\psi}$ is the contraction by a $(0,1)$-vector form with values in a holomorphic vector bundle, as we recall in section \ref{sec:formulas},
%let $(M,J)$ be a compact $n$-dimensional complex manifold and let $g$ be an astheno-K\"ahler metric with fundamental form $\omega$. Let $\{M_t\}_{t\in I}$, $I=(-\epsilon,\epsilon)$, $\epsilon>0$, be a curve of deformations parametrized by a $(0,1)$-vector form $\phi(t)$, $t\in I$, with values in $T^{1,0}(M)$ and let $\{\omega_t\}_{t\in I}$ be a curve of Hermitian metrics along the curve $\{M_t\}_{t\in I}$. In local coordinates, write $\omega=\omega_{i\overline{j}}dz^i\wedge d\overline{z}^j$, and through the exponential map $e^{i_{\phi}|i_{\overline{\phi}}}$ (see Section \ref{sec-astheno-prel-def}), write $\omega_t=e^{i_{\phi}|i_{\overline{\phi}}}(\omega(t))=e^{i_{\phi}|i_{\overline{\phi}}}(\omega_{i\overline{j}}(t)dz_i\wedge d\overline{z}^j)$, so that
%\[
%\omega_t^{n-2}=e^{i_{\phi}|i_{\overline{\phi}}}(\omega^{n-2}(t))=e^{i_{\phi}|i_{\overline{\phi}}}(\omega_{i_1\overline{j}_1}(t)\dots\omega_{i_{n-2}\overline{j}_{n_2}}(t)dz^{i_1}\wedge\dots\wedge dz^{ i_{n-2}}\wedge d\overline{z}^{j_1}\wedge\dots\wedge d\overline{z}^{j_{n-2}}).
%\] 
%Setting
%\begin{equation*}
%(\omega^{n-2}(t))':=\de{}{t}(\omega_{i_1\overline{j}_1}(t)\dots\omega_{i_{n-2}\overline{j}_{n_2}}(t))dz^{i_1}\wedge\dots\wedge dz^{ i_{n-2}}\wedge d\overline{z}^{j_1}\wedge\dots\wedge d\overline{z}^{j_{n-2}},
%\end{equation*}
we have obtained the following results.
\begin{theorem*}
Let $(M,J)$ be a $n$-dimensional compact complex manifold endowed with an astheno-K\"ahler metric $g$ and associated fundamental form $\omega$. Let $\{M_t\}_{t\in I}$ be a differentiable family of compact complex manifolds with $M_0=M$ and parametrized  by $\phi(t)\in\A^{0,1}(T^{1,0}(M))$, for $t\in I:=(-\epsilon,\epsilon)$, $\epsilon>0$. Let $\{\omega_t\}_{t\in I}$ be a smooth family of Hermitian metrics along $\{M_t\}_{t\in I}$, written as
\begin{equation*}
\omega_t=e^{\iota_{\phi}|\iota_{\overline{\phi}}}\,\,(\omega(t)),
\end{equation*}
where, locally,  $\omega(t)=\omega_{ij}(t)\, dz^i\wedge d\overline{z}^j\in\mathcal{A}^{1,1}(M)$ and $\omega_0=\omega$.

If $\omega_t^{n-2}$ has local expression $e^{\iota_{\phi}|\iota_{\overline{\phi}}}(\omega^{n-2}(t))=e^{\iota_{\phi}|\iota_{\overline{\phi}}}(f_v(t)\,dz^{i_1}\wedge d\overline{z}^{j_1}\wedge\dots\wedge dz^{i_{n-2}}\wedge d\overline{z}^{j_{n-2}})$, set
\[(\omega^{n-2}(t))':=\de{}{t}(f_v(t))\, dz^{i_1}\wedge d\overline{z}^{j_1}\wedge\dots dz^{i_{n-2}}\wedge d\overline{z}^{j_{n-2}}\in\A^{n-2,n-2}(M).
\]
Then, if every metric $\omega_t$ is astheno-K\"ahler, for $t\in I$, it must hold that
\begin{equation*}
2i\mathfrak{Im}(\del\circ \iota_{\phi'(0)}\circ\del) (\omega^{n-2})=\del\delbar (\omega^{n-2}(0))'.
\end{equation*}
\end{theorem*}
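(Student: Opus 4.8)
The plan is to transport the astheno-K\"ahler condition for $\omega_t$ on $M_t$ back to the fixed manifold $M$ through the extension map, expand it to first order in the deformation parameter, and then differentiate at $t=0$. Write $\del_t,\delbar_t$ for the Dolbeault operators of $J_t$, so that the astheno-K\"ahler hypothesis reads $\del_t\delbar_t\,\omega_t^{n-2}=0$ for every $t\in I$. Since $\omega_t^{n-2}=e^{\iota_{\phi}|\iota_{\overline{\phi}}}(\omega^{n-2}(t))$ and the extension map is a pointwise isomorphism carrying $J$-forms to $J_t$-forms, I would invoke the commutation formulas recalled/established in Section~\ref{sec:formulas} to write $\delbar_t\circ e^{\iota_{\phi}|\iota_{\overline{\phi}}}=e^{\iota_{\phi}|\iota_{\overline{\phi}}}\circ L(t)$ and $\del_t\circ e^{\iota_{\phi}|\iota_{\overline{\phi}}}=e^{\iota_{\phi}|\iota_{\overline{\phi}}}\circ K(t)$ for suitable operators $K(t),L(t)$ acting on $\A^\bullet(M)$ and depending on $\phi(t),\overline{\phi(t)}$. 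Composing these two identities gives $\del_t\delbar_t\,\omega_t^{n-2}=e^{\iota_{\phi}|\iota_{\overline{\phi}}}\big(K(t)L(t)\,\omega^{n-2}(t)\big)$, and since $e^{\iota_{\phi}|\iota_{\overline{\phi}}}$ is invertible, the astheno-K\"ahler condition is equivalent to the equation $K(t)L(t)\,\omega^{n-2}(t)=0$ on $M$, valid for all $t\in I$.

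Next I would use that $\phi(0)=0$ (because $M_0=M$), so at $t=0$ one has $K(0)=\del$, $L(0)=\delbar$, and the equation degenerates to $\del\delbar\,\omega^{n-2}=0$, i.e.\ the astheno-K\"ahler condition for the initial metric, which holds by hypothesis. The formulas of Section~\ref{sec:formulas} yield the first-order expansions $L(t)=\delbar+[\del,\iota_{\phi(t)}]+O(\phi^2)$ and $K(t)=\del+[\delbar,\iota_{\overline{\phi(t)}}]+O(\phi^2)$, where $\iota_{\phi}$ is an even operator, so the graded brackets are $[\del,\iota_{\phi}]=\del\iota_{\phi}-\iota_{\phi}\del$ and $[\delbar,\iota_{\overline{\phi}}]=\delbar\iota_{\overline{\phi}}-\iota_{\overline{\phi}}\delbar$ (these two structures being complex conjugates of one another). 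Because $\phi(0)=0$, every $O(\phi^2)$ term has vanishing derivative at $t=0$, and differentiating the first-order parts gives $\tfrac{d}{dt}\big|_0 L=\del\iota_{\phi'(0)}-\iota_{\phi'(0)}\del$ and $\tfrac{d}{dt}\big|_0 K=\delbar\iota_{\overline{\phi}'(0)}-\iota_{\overline{\phi}'(0)}\delbar$.

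Differentiating $K(t)L(t)\,\omega^{n-2}(t)=0$ at $t=0$ by the Leibniz rule produces three terms, $\big(\tfrac{d}{dt}\big|_0 K\big)\delbar\,\omega^{n-2}+\del\big(\tfrac{d}{dt}\big|_0 L\big)\omega^{n-2}+\del\delbar\,(\omega^{n-2}(0))'=0$, where I use that $\tfrac{d}{dt}\big|_0\omega^{n-2}(t)=(\omega^{n-2}(0))'$ is exactly the coefficient-derivative defined in the statement, computed in the fixed frame. Now $\del^2=\delbar^2=0$ collapse the first two terms: the first becomes $\delbar\iota_{\overline{\phi}'(0)}\delbar\,\omega^{n-2}$ and the second becomes $-\del\iota_{\phi'(0)}\del\,\omega^{n-2}$. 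Rearranging gives $\del\iota_{\phi'(0)}\del\,\omega^{n-2}-\delbar\iota_{\overline{\phi}'(0)}\delbar\,\omega^{n-2}=\del\delbar(\omega^{n-2}(0))'$, and since $\omega^{n-2}$ is real one has $\overline{\del\iota_{\phi'(0)}\del\,\omega^{n-2}}=\delbar\iota_{\overline{\phi}'(0)}\delbar\,\omega^{n-2}$, so the left-hand side is $A-\overline{A}=2i\,\mathfrak{Im}(A)$ with $A=\del\circ\iota_{\phi'(0)}\circ\del\,(\omega^{n-2})$, which is precisely the asserted identity.

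The main obstacle is the accurate derivation and use of the first-order conjugation formulas of Section~\ref{sec:formulas}: one must verify that the corrections to $\del_t$ and $\delbar_t$ enter exactly as the graded commutators $[\delbar,\iota_{\overline{\phi}}]$ and $[\del,\iota_{\phi}]$, since it is this precise bracket structure that lets $\del^2=\delbar^2=0$ annihilate the unwanted pieces ($\del\del\iota_{\phi'(0)}\,\omega^{n-2}$ and $\iota_{\overline{\phi}'(0)}\delbar\delbar\,\omega^{n-2}$) and leave exactly the holomorphic term and its conjugate whose difference produces the imaginary part. A subtlety to control is that the surviving term from $\tfrac{d}{dt}\big|_0 K$ does not vanish only because it is the piece $\delbar\iota_{\overline{\phi}'(0)}\delbar$ and not $\iota_{\overline{\phi}'(0)}\delbar\delbar$; an incorrect sign in the commutator would collapse this contribution and destroy the $\mathfrak{Im}$ structure. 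The remaining points—neglecting higher-order terms (automatic since $\phi(0)=0$) and matching $(\omega^{n-2}(0))'$ with $\tfrac{d}{dt}\big|_0\omega^{n-2}(t)$ in the fixed coordinate frame—are routine.
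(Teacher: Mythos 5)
Your proof is correct and follows essentially the same route as the paper: both transport the condition $\del_t\delbar_t\omega_t^{n-2}=0$ back to $M$ via the Rao--Zhao formulas \eqref{delt}--\eqref{delbart}, expand to first order in $t$ (the $(I-\phi\overline{\phi})^{\pm 1}$ factors contributing only at order $o(t)$ since $\phi(0)=0$), differentiate at $t=0$, and use $\del^2=\delbar^2=0$ together with the reality of $\omega^{n-2}$ to arrive at $2i\mathfrak{Im}(\del\circ\iota_{\phi'(0)}\circ\del)(\omega^{n-2})=\del\delbar(\omega^{n-2}(0))'$. Your packaging of the computation through the operators $K(t),L(t)$ and the Leibniz rule is merely a cleaner phrasing of the paper's explicit Taylor-series expansion, not a different argument.
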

As a direct consequence, we immediately have the following corollary.
\begin{cor*}
Let $(M,J)$ be a compact Hermitian manifold endowed with an asteno-K\"ahler metric $g$ and associated fundamental form $\omega$. If there exists a smooth family of astheno-K\"ahler metrics which coincides with $\omega$ in $t=0$, along the family of deformations $\{M_t\}_t$ with $M_0=M$ and parametrized by the $(0,1)$-vector form $\phi(t)$ on $M$, then the following equation must hold
\begin{equation*}
\left[(\del\circ \iota_{\phi'(0)}\circ\del) (\omega^{n-2})\right]_{H_{BC}^{n-1,n-1}(M)}=0.
\end{equation*}
\end{cor*}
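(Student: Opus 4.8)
Set $\alpha:=(\del\circ\iota_{\phi'(0)}\circ\del)(\omega^{n-2})\in\A^{n-1,n-1}(M)$. The plan is to read off from the preceding theorem, first, that $\alpha$ determines a well-defined class in $H_{BC}^{n-1,n-1}(M)$, and second, that this class is trivial. Throughout I will use that $\omega^{n-2}$ is a real form, being a power of the real fundamental form $\omega$, so that complex conjugation interchanges $\del$ with $\delbar$ and $\iota_{\phi'(0)}$ with $\iota_{\overline{\phi'(0)}}$; in particular $\overline{\alpha}=\delbar\bigl(\iota_{\overline{\phi'(0)}}\delbar\omega^{n-2}\bigr)$.

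First I would verify that $[\alpha]_{BC}$ is meaningful. The form $\alpha=\del\bigl(\iota_{\phi'(0)}\del\omega^{n-2}\bigr)$ is visibly $\del$-exact, hence $\del$-closed. For $\delbar$-closedness I would apply $\delbar$ to the identity of the theorem, written as $\alpha-\overline{\alpha}=\del\delbar(\omega^{n-2}(0))'$: the right-hand side is killed by $\delbar$ since $\delbar\del\delbar=0$, while $\overline{\alpha}$ is $\delbar$-exact and therefore $\delbar$-closed; hence $\delbar\alpha=\delbar\overline{\alpha}=0$. Thus $\alpha\in\Ker\del\cap\Ker\delbar$ and $[\alpha]_{BC}\in H_{BC}^{n-1,n-1}(M)$ is defined.

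Next, the theorem states precisely that $\alpha-\overline{\alpha}=\del\delbar(\omega^{n-2}(0))'$ is $\del\delbar$-exact, so that $[\alpha]_{BC}=[\overline{\alpha}]_{BC}$; equivalently, $\mathfrak{Im}(\alpha)$ is $\del\delbar$-exact and $[\mathfrak{Im}(\alpha)]_{BC}=0$. What remains is to upgrade this to the vanishing of $[\alpha]_{BC}$ itself, and this is the step I expect to be the main obstacle. Indeed, the same identity rewrites $\overline{\alpha}=\del\bigl(\iota_{\phi'(0)}\del\omega^{n-2}-\delbar(\omega^{n-2}(0))'\bigr)$, so $\overline{\alpha}$, and by conjugation $\alpha$, is simultaneously $\del$-exact and $\delbar$-exact; but on a manifold that is not assumed to satisfy the $\del\delbar$-lemma one cannot pass for free from ``$\del$- and $\delbar$-exact'' to ``$\del\delbar$-exact,'' and a naive argument only yields that $[\alpha]_{BC}$ is a real class fixed by conjugation.

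The way I would close the gap is to show that the real part $\mathfrak{Re}(\alpha)=\tfrac12(\alpha+\overline{\alpha})$ is also $\del\delbar$-exact; together with the theorem's control of $\mathfrak{Im}(\alpha)$ this gives $\alpha=\mathfrak{Re}(\alpha)+i\,\mathfrak{Im}(\alpha)\in\del\delbar\bigl(\A^{n-2,n-2}(M)\bigr)$ and hence $[\alpha]_{BC}=0$. Concretely, I expect the exactness of $\mathfrak{Re}(\alpha)$ to emerge by re-examining the first-order expansion of the astheno-K\"ahler condition $\del_t\delbar_t\omega_t^{n-2}=0$ transported to $M$ through $e^{\iota_{\phi}|\iota_{\overline{\phi}}}$ and isolating its real part, in parallel with the way the theorem isolates the imaginary part; this is the only computational point that the corollary does not get immediately for free.
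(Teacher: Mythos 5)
Your preparatory steps are correct and they already exhaust what Theorem \ref{thm:def_curve_astheno} can deliver: with $\alpha:=(\del\circ\iota_{\phi'(0)}\circ\del)(\omega^{n-2})$, your verification that $\alpha$ is $d$-closed (so that $[\alpha]_{BC}$ is well defined) and that the identity $\alpha-\overline{\alpha}=\del\delbar(\omega^{n-2}(0))'$ forces $[\alpha]_{BC}=[\overline{\alpha}]_{BC}$, i.e.\ $[\mathfrak{Im}(\alpha)]_{BC}=0$, is precisely the content of the paper's ``direct consequence.'' The genuine gap is your last step, and it cannot be closed in the way you propose. The first-order expansion of $\del_t\delbar_t\omega_t^{n-2}=0$ yields the single identity $-\alpha+\overline{\alpha}+\del\delbar(\omega^{n-2}(0))'=0$ and nothing else; this identity shows that $\del\delbar(\omega^{n-2}(0))'=\alpha-\overline{\alpha}=2i\,\mathfrak{Im}(\alpha)$ is anti-self-conjugate, so conjugating the identity reproduces it verbatim, and ``isolating its real part'' produces only the trivial identity $0=0$. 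In other words, the first-order astheno-K\"ahler condition places no constraint whatsoever on $\mathfrak{Re}(\alpha)$: there is no hidden equation to extract, and the higher-order terms of the expansion involve $\phi''(0)$, $(\omega^{n-2})''(0)$, and so on, rather than $\mathfrak{Re}(\alpha)$. Hence the $\del\delbar$-exactness of $\mathfrak{Re}(\alpha)$, and with it the literal vanishing $[\alpha]_{BC}=0$, is not reachable from the Theorem.

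What your analysis has in fact uncovered is that the Corollary, read literally, overstates the Theorem. The paper offers no argument beyond the words ``as a direct consequence,'' and in its applications it only ever verifies the imaginary-part condition: in Example 1, case (ii), where $(\del\circ\iota_{\phi'(0)}\circ\del)\,\omega^2=2(|a_7|^2-|a_4|^2)\frac{a_1u_2}{a_4}\,\eta^{123\overline{123}}$ and $[\eta^{123\overline{123}}]_{BC}\neq 0$, the recorded obstruction is $(|a_4|^2-|a_7|^2)\,\mathfrak{Re}(a_1u_2/a_4)\neq 0$; the real part appears exactly because what is being tested is $[\mathfrak{Im}(\alpha)]_{BC}$, whereas the literal condition $[\alpha]_{BC}\neq 0$ would instead read $(|a_4|^2-|a_7|^2)\,a_1u_2\neq 0$. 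Example 2 writes $[\mathfrak{Im}(\del\circ\iota_{\phi'(0)}\circ\del(\omega^2))]_{BC}$ outright. Your own diagnosis in the middle of the proposal is the right one: $\alpha$ is $\del$-exact and $\delbar$-exact, but on a manifold not assumed to satisfy the $\del\delbar$-lemma this only makes $[\alpha]_{BC}$ a conjugation-fixed class, not a vanishing one. The statement you did prove, namely $\bigl[\mathfrak{Im}\bigl((\del\circ\iota_{\phi'(0)}\circ\del)(\omega^{n-2})\bigr)\bigr]_{BC}=0$, is the correct form of Corollary \ref{cor:def_curve_astheno} and the one the paper actually uses; your proposal should end there rather than promise the unprovable strengthening.
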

We note that the proof of the main theorem makes use of the formulas proved in \cite{RZ18} for the differential operators $\del_t$ and $\delbar_t$ acting on $(p,q)$-forms on each $M_t$, $t\in I$, combined with the Taylor series expansion centered in $t=0$.

As an application of our results, we characterize the obstructions to the existence of curves of astheno-K\"ahler metrics along certain curves of deformations, starting from two families of $4$-dimensional nilmanifolds endowed with invariant nilpotent complex structures. Thanks to general results (see e.g., \cite{ConFin,AngKas,Ang}), nilmanifolds with invariant complex structures (see, e.g., \cite{CFGU}) are a natural source of examples of compact complex non-K\"ahler manifolds on which both cohomological and metric properties can be explicitly verified at the level of invariant tensors. %(see, e.g.,  \cite{ConFin,AngKas,Ang,FT11,RosTom, SfeTar, SfeTom1, SfeTom2, PS, Sf, TarTom}).
%Furthermore, in Lemma \ref{lem:tecnico_astheno_def} we exploit the complex structure of the first family of manifolds to outline an approach to find obstructions by means of Corollary \ref{cor:def_curve_astheno}; such an approach is then extended to families of nilmanifolds with similar complex structure in any dimension. 

The paper is organized as follows. In section \ref{sec:prel}, we fix the notations that will be used throughout the paper. In section \ref{sec:formulas}, we briefly recall the necessary tools of deformation theory, among which the extension map and the formulas for the differential operators $\del_t$ and $\delbar_t$ acting on differential forms on a family of deformations. In section \ref{sec:main}, we will state and prove the main theorem. In section \ref{sec:nilm_nilp_cplx_struct}, we recall the main facts about the geometry of nilmanifolds and special complex structures and in section \ref{sec:appl} we characterize the obstructions yielded by Theorem \ref{thm:def_curve_astheno} and Corollary \ref{cor:def_curve_astheno} starting from two families of $4$-dimensional nilmanifolds with invariant nilpotent complex structure.
\vspace{0.5cm}

\emph{Acknowledgements.} The author would like to kindly thank Adriano Tomassini for many useful suggestions and comments.

\section{Notation}\label{sec:prel}

Let $(M,J)$ be a compact complex manifold of complex dimension $n$, where $M$ is a $2n$-dimensional compact differentiable manifold and $J$ is a {\em integrable almost complex structure} on $M$, i.e., an endomorphism of $TM$ such that $J^2=-\id_{TM}$ and the \emph{Nijenhuis tensor} $N_J$ identically vanishes, that is,
\begin{equation*}
N_J(X,Y)=[X,Y]-J[JX,Y]-J[X,JY]-[JX,JY]=0
\end{equation*}
for every $X,Y\in TM$.

Once the complex structure $J$ is extended to the complexified version of the bundles $TM$, we obtain the decomposition
\begin{equation*}
T_\C M=T^{1,0}M\oplus T^{0,1}M
\end{equation*}
in terms of the $\pm i$-eigenspaces of $J$. A similar decomposition holds when we consider the induced action of $J$ on complex forms, i.e.,
\begin{equation*}
T_\C^*M=(T^{1,0}M)^*\oplus (T^{0,1}M)^*
\end{equation*}
so that on the exterior powers $\bigwedge_{\C}^{k}M:=\bigwedge^k(T_\C^*M)$ it holds
\begin{equation*}
\textstyle\bigwedge_\C^kM=\displaystyle\bigoplus_{p+q=k}\textstyle\bigwedge^{p,q}M,
\end{equation*}
where $\bigwedge^{p,q}M:=\bigwedge^{p}(T^{1,0}M)^*\otimes \bigwedge^{q}(T^{0,1}M)^*$ is the bundle of $(p,q)$-forms on $M$. Let $\mathcal{A}_{\C}^k(M)$ and $\mathcal{A}^{p,q}(M)$ denote the global sections of the bundles of, respectively, complex $k$-forms and $(p,q)$-forms on $(M,J)$. Then, since $J$ is integrable, the differential $d$ acts on $(p,q)$-forms on $M$ as
\begin{equation*}
d\colon \mathcal{A}^{p,q}(M)\rightarrow \mathcal{A}^{p+1,q}(M)\oplus \mathcal{A}^{p,q+1}(M),
\end{equation*}
so that $d=\del+\delbar$, where we have set $\del:=\pi^{p+1,q}$ and $\delbar:=\pi^{p,q+1}$. Clearly, $\del^2=\delbar^2=0$ and $\del\delbar=-\delbar\del$.

Let $g$ be a Hermitian metric on $(M,J)$, i.e., a Riemannian metric on $M$ such that $g(JX,JY)=g(X,Y)$ for every $X,Y\in TM$. Let $\omega(X,Y):=g(JX,Y)$ for every $X,Y\in TM$, be the fundamental form associated to $g$. The extension of $\omega$ to a form  on $\mathcal{A}_\C^{2}M$ is a positive $(1,1)$-form on $M$. A \emph{astheno-K\"ahler metric} on a $n$-dimensional complex manifold $(M,J)$ is a Hermitian metric $g$ such that its fundamental form satisfies
\begin{equation*}
\del\delbar\omega^{n-2}=0.
\end{equation*}
A \emph{strong K\"ahler with torsion metric} on $(M,J)$ is a Hermitian metric $g$ such that its fundamental form satisfies
\begin{equation*}
\del\delbar\omega=0.
\end{equation*}

The \emph{Bott-Chern cohomology} of $(M,J)$ is given by the spaces
\begin{equation*}
H_{BC}^{p,q}(M):=\frac{\Ker\left(d\colon \mathcal{A}^{p,q}(M)\rightarrow\mathcal{A}^{p+1,q}(M)\oplus \mathcal{A}^{p,q+1}(M)\right)}{\im\left(\del\delbar\colon \mathcal{A}^{p-1,q-1}(M)\rightarrow\mathcal{A}^{p,q}(M)\right)},
\end{equation*}
whereas we say that a $(p,q)$-form $\alpha$ on $M$ is said \emph{Bott-Chern harmonic} if $\Delta_{BC}\alpha=0$, where the \emph{Bott-Chern Laplacian}
\begin{equation*}
\Delta_{BC}=\del\delbar\delbar^*\del^*+
\delbar^*\del^*\del\delbar+\delbar^*\del\del^*\delbar+
\del^*\delbar\delbar^*\del+\delbar^*\delbar+\del^*\del
\end{equation*} 
is the fourth-order self-adjoint elliptic operator. Since $M$ is assumed to be compact, $\alpha$ is Bott-Chern harmonic if, and only if, $d\alpha=0$ and $\del\delbar\ast\alpha=0$, where $\ast$ is the $\C$-antilinear Hodge  $\ast$-operator with respect to $g$. Let us denote $\mathcal{H}_{\Delta_{BC}}^{p,q}(M)$ the space of Bott-Chern harmonic $(p,q)$-forms. Hodge theory adapted to Bott-Chern cohomology by Schweitzer in \cite{Sw} implies that there exist the following isomorphisms of vector spaces
\begin{equation*}
\mathcal{H}_{\Delta_{BC}}^{p,q}(M)\xrightarrow{\sim} H_{BC}^{p,q}(M).
\end{equation*}

Let $\pi\colon E\rightarrow M$ be a holomorphic vector bundle of rank $r$ on $M$. Then, we set $\bigwedge^{p,q}(M,E):=\bigwedge^{p,q}M\otimes E$ for the bundle of \emph{$(p,q)$-forms with values in $E$}, and $\mathcal{A}^{p,q}(M,E)$ will denote the its global sections.

We can define a $\delbar_E$ operator on $\mathcal{A}^{p,q}(M,E)$ in the following way. Let $\phi=\sum_i\eta^i\otimes s_i\in\mathcal{A}^{p,q}(M,E)$, where each $\eta^i\in\mathcal{A}^{p,q}(M)$ and $(s_1,\dots,s_r)$ is the expression of $s$ in a local triviallization on $E$. Then
\begin{equation}
\delbar_E\phi:= \sum_{i}\delbar \eta^i\otimes s_i.
\end{equation} 

Throughout this paper, we will consider essentially as $E$ the holomorphic tangent bundle $T^{1,0}M$ and its conjugate $T^{0,1}M$ and we will refer the sections of $\mathcal{A}^{0,q}(M,T^{1,0}M)$ as \emph{$(0,q)$-vector forms} on $M$; we will omit the term ``$M$" in the notations when understood.

We also can define contraction by $(0,r)$-vector forms on any holomorphic vector bundle $E$ over $M$. Let $\xi=\alpha\otimes s\in\mathcal{A}^{p,q}(E)$ and $\phi=\overline{\eta}\otimes Z\in\mathcal{A}^{0,r}(T^{1,0}M)$. Then the \emph{contraction by $\phi$} is defined as
\begin{gather*}
\iota_\phi\colon \mathcal{A}^{p,q}(E)\rightarrow \mathcal{A}^{p-1,q+r}(E)\\
\xi \mapsto \iota_{\phi}(\xi):=\overline{\eta}\wedge \iota_{Z}(\alpha)\otimes s.
\end{gather*} 
By linearity, we can extend such definition to respectively $\mathcal{A}^{p,q}(E)$ and $\mathcal{A}^{0,s}(T^{1,0}M)$. If one considers the conjugate $\overline{\phi}$ of a $(0,q)$-vector form on $M$, i.e., $\overline{\phi}\in\mathcal{A}^{q,0}(T^{0,1}M)$, a contraction can be analogously defined by setting $\iota_{\overline{\phi}}(\xi):=\eta\wedge \iota_{\overline{Z}}(\alpha)\wedge s$, for every $\xi\in\mathcal{A}^{p,q}(E)$. We will interchangeably use the notations $\phi\intprod$ for $\iota_{\phi}$ and $\overline{\phi}\intprod$ for $\iota_{\overline{\phi}}$.

\section{Review of deformation theory and the operators $\del_t$ and $\delbar_t$ along curves of deformations}\label{sec:formulas}
In this section we recall the formulas fo the action of the differential operators $\del_t$ and $\delbar_t$ on each element of a differentiable family of deformations $\{M_t\}$ of a compact complex manifold $(M,J)$; we follow the approach in \cite{RZ18}, in which such operators are expressed in terms of the operators $\del$ and $\delbar$ on $(M,J)$ and on a $(0,1)$-vector form which parametrizes the differential family $\{M_t\}_{t}$.
 
Let $B$ be a differential manifold of real dimension $m$.
\begin{definition}[\cite{KM}]
A \emph{differentiable family of compact complex manifold} is a differentiable manifold $\mathcal{M}$ and a differentiable proper map $\pi\colon \mathcal{M}\rightarrow B$ such that
\begin{itemize}
\item for every $t\in B$, the fiber $M_t:=\pi^{-1}(t)$ has a structure of complex manifold;
\item the rank of the Jacobian of $\pi$ is constantly equal to $m$, at every point $p\in\mathcal{M}$.
\end{itemize}
\end{definition}
If $(M,J)$ is a compact complex manifold, then a differentiable family of compact complex manifolds is a \emph{differential family of deformations} of $(M,J)$ if there exists $t_0\in B$ such that $M_{t_0}=(M,J)$. We refer to this fixed element of the family as the \emph{central fiber}. Without loss of generality, we can assume that the space of parameters $B$ to be a polydisc in $\R^m$, i.e., $$B:=\{(t_1,\dots,t_m)\in\R^m: |t_j|<\epsilon,j\in\{1,\dots,m\}\},$$ for $\epsilon>0$, and $t_0$ will always be $0\in\R^m$. 

Viceversa, starting from a compact complex manifold $(M,J)$, in \cite{KM} it is shown that one can construct families of deformations $\{M_t\}_{t\in B}$ of $(M,J)$ by setting $$M_t:=(M,J_t), \qquad\text{for every}\quad t\in B,$$ where $J_t$ is an integrable complex structure on the differentiable manifold $M$ parametrized by a $(0,1)$-vector form $\phi(t)$ on the central fiber $(M,J)$, i.e, $\phi(t)\in\mathcal{A}^{0,1}(T^{1,0}M)$. From $M_0=(M,J)$, it follows that $\phi(0)=0$. We remark that in order for each $J_t$ to define an integrable complex structure on $M$, the $(0,1)$-vector form $\phi(t)$ must satisfy the \emph{Maurer-Cartan equation}, i.e.,
\begin{equation}\label{eq:MC_phi}
\delbar\phi(t)-\frac{1}{2}[\phi(t),\phi(t)]=0, \qquad\text{for every}\quad t\in B.
\end{equation}
For the sake of semplicity, we will set the dimension of the parameters space $B$ to be $m=1$, i.e., $B=I=(-\epsilon,\epsilon)$, for $\epsilon>0$.

Let then $(M,J)$ be a compact $n$-dimensional complex manifold and let $\phi(t)\in\mathcal{A}^{0,1}(T^{1,0}(M))$, $t\in I$, parametrize a differentiable family of deformations $\{M_t\}_{t\in I}$ of $(M,J)$. We will refer to differential families over an interval $I$ as \emph{curves of deformations}.

We now recall a map which links the $(p,q)$-forms on the central fiber $(M,J)$ and on every element of the family $M_t=(M,J_t)$. First, let us then define the maps
\begin{equation}
e^{\iota_{\phi(t)}}:=\sum_{j=1}^{\infty}\iota_{\phi(t)}^k, \qquad e^{\iota_{\overline{\phi(t)}}}:=\sum_{j=1}^{\infty}\iota_{\overline{\phi(t)}}^k,
\end{equation}
where $i_\psi^k$ is the contraction by the vector form $\psi$ repeated $k$ times. Note that since $M$ is compact, each summation is finite.

If $\alpha=\alpha_{i_1\dots i_p j_1\dots j_q }dz^{i_1}\wedge\dots\wedge dz^{i_p}\wedge d\overline{z}^{j_1}\wedge\dots\wedge  d\overline{z}^{j_q}$ is the local expression of a $(p,q)$-form $\alpha$ on $(M,J)$, then the \emph{extension map} $e^{\iota_{\phi(t)}|\iota_{\overline{\phi}(t)}}$ is defined as
\begin{equation*}
e^{\iota_{\phi(t)}|\iota_{\overline{\phi(t)}}}(\alpha)=\alpha_{i_1\dots i_p j_1\dots j_q }e^{\iota_{\phi(t)}}(dz^{i_1}\wedge dz^{i_p})\wedge e^{\iota_{\overline{\phi}(t)}}(d\overline{z}^{j_1}\wedge d\overline{z}^{j_q}).
\end{equation*}
Note that such a map defines a global object, since $\phi(t)$  is a global $(0,1)$-vector form on $(M,J)$. At the level of each space of $(p,q)$-forms, we have the following result, see \cite[Lemma 2.9]{RZ18}
\begin{lemma}\label{lemma:rao-zhao}
For a fixed $t\in I$, the exponential map defines the following isomorphisms
\begin{gather*}
e^{\iota_{\phi(t)}|\iota_{\overline{\phi(t)}}}\colon \mathcal{A}^{p,q}(M)\rightarrow\mathcal{A}^{p,q}(M_t).
\end{gather*}
\end{lemma}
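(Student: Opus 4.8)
The plan is to prove the statement pointwise, by exhibiting $e^{\iota_{\phi(t)}|\iota_{\overline{\phi(t)}}}$ as an isomorphism of complex vector bundles $\bigwedge^{p,q}M\to\bigwedge^{p,q}M_t$, and then to pass to global smooth sections. Since the extension map is defined blockwise on a local coframe, the first step is to record its action on generators. Writing locally $\phi(t)=\phi^i_{\overline k}(t)\,d\overline z^k\otimes\frac{\partial}{\partial z^i}$ and noting that $\iota_{\phi(t)}$ contracts the $(1,0)$-part while raising the $(0,1)$-degree, one has $\iota_{\phi(t)}(d\overline z^j)=0$, whence
\[
e^{\iota_{\phi(t)}}(dz^i)=dz^i+\phi^i_{\overline k}(t)\,d\overline z^k=:\zeta^i,\qquad e^{\iota_{\phi(t)}}(d\overline z^j)=d\overline z^j,
\]
and symmetrically $e^{\iota_{\overline{\phi(t)}}}(d\overline z^j)=d\overline z^j+\overline{\phi^j_{\overline k}(t)}\,dz^k=:\overline\zeta^j$ with $e^{\iota_{\overline{\phi(t)}}}(dz^i)=dz^i$. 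Each exponential is a finite sum, since $\iota_{\phi(t)}$ lowers the holomorphic degree and $M$ has finite dimension $n$. Consequently the extension map sends $dz^{i_1}\wedge\cdots\wedge dz^{i_p}\wedge d\overline z^{j_1}\wedge\cdots\wedge d\overline z^{j_q}$ to $\zeta^{i_1}\wedge\cdots\wedge\zeta^{i_p}\wedge\overline\zeta^{j_1}\wedge\cdots\wedge\overline\zeta^{j_q}$, i.e.\ it carries the standard local frame of $\bigwedge^{p,q}M$ to the corresponding wedges of the $\zeta^i$ and $\overline\zeta^j$.

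The crux is then to identify these forms with the bidegree decomposition induced by $J_t$. I would verify that $\{\zeta^i\}$ spans $(T^{1,0}M_t)^*$ and $\{\overline\zeta^j\}$ spans $(T^{0,1}M_t)^*$. By the construction of $J_t$ from $\phi(t)$ recalled in Section~\ref{sec:formulas}, the antiholomorphic tangent space $T^{0,1}M_t$ is locally spanned by the vector fields $V_{\overline j}=\frac{\partial}{\partial\overline z^j}-\phi^i_{\overline j}(t)\frac{\partial}{\partial z^i}$. A direct pairing then yields
\[
\zeta^i(V_{\overline j})=-\phi^i_{\overline j}(t)+\phi^i_{\overline j}(t)=0,
\]
so each $\zeta^i$ annihilates $T^{0,1}M_t$ and is therefore of type $(1,0)$ for $J_t$; conjugating shows that each $\overline\zeta^j$ is of type $(0,1)$ for $J_t$. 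Hence the extension map indeed takes $\bigwedge^{p,q}M$ into $\bigwedge^{p,q}M_t$, which is exactly what makes the target in the statement correct.

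It remains to prove bijectivity, for which it suffices to show that $\{\zeta^i,\overline\zeta^j\}$ is a local coframe on $M_t$: then the extension map carries a basis of $\bigwedge^{p,q}M$ to a basis of $\bigwedge^{p,q}M_t$ at every point. In the frame $\{dz^i,d\overline z^j\}$ the change of basis is the block matrix $\left(\begin{smallmatrix} I & \phi(t)\\ \overline{\phi(t)} & I\end{smallmatrix}\right)$, whose determinant equals $\det\!\bigl(I-\overline{\phi(t)}\,\phi(t)\bigr)$ by the Schur complement. Since we work along a curve of deformations with $\phi(0)=0$, for $t\in I$ the $(0,1)$-vector form $\phi(t)$ is small enough that $J_t$ is a genuine almost complex structure, equivalently that this determinant is nonzero; thus the matrix is invertible and $\{\zeta^i,\overline\zeta^j\}$ is a coframe. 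The map is therefore a fiberwise linear isomorphism, and because $\phi(t)$ is a global $(0,1)$-vector form on $(M,J)$ the construction does not depend on the chosen holomorphic chart and patches to a global bundle isomorphism; taking global sections then yields the asserted isomorphism $\mathcal{A}^{p,q}(M)\xrightarrow{\sim}\mathcal{A}^{p,q}(M_t)$.

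I expect the main obstacle to be the bidegree identification of the second step, namely checking carefully that the forms $\zeta^i$ produced by $e^{\iota_{\phi(t)}}$ coincide with the $(1,0)$-forms of the deformed structure $J_t$; this is precisely where the definition of $J_t$ in terms of $\phi(t)$ enters, and the sign in $V_{\overline j}$ must be matched to the convention implicit in the contraction $\iota_{\phi(t)}$. A convenient packaging that streamlines the first and third steps is the observation that $\iota_{\phi(t)}$ is an even derivation of the exterior algebra, so that $e^{\iota_{\phi(t)}}$ is an algebra automorphism determined by its values on generators; the equality $e^{\iota_{\phi(t)}}(dz^{i_1}\wedge\cdots\wedge dz^{i_p})=\zeta^{i_1}\wedge\cdots\wedge\zeta^{i_p}$ and the passage to all bidegrees then follow formally, the only genuine input being the nonvanishing of the two determinants above.
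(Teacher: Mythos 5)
Your proof is correct, and it coincides in essence with the source proof: the paper does not prove this lemma itself but imports it verbatim from \cite[Lemma 2.9]{RZ18}, where the argument is exactly yours — one checks that $e^{\iota_{\phi(t)}}(dz^i)=dz^i+\phi^i_{\overline{k}}(t)\,d\overline{z}^k$ annihilates $T^{0,1}M_t$ and hence gives a local $(1,0)$-coframe for $J_t$, and bijectivity follows from the invertibility of the block change-of-frame matrix with determinant $\det\bigl(I-\overline{\phi(t)}\,\phi(t)\bigr)$, which is guaranteed precisely because $J_t$ is an (almost) complex structure for $t\in I$. The one convention-sensitive point, which you correctly flag and resolve, is the sign in the local spanning vectors $\frac{\partial}{\partial\overline{z}^j}-\phi^i_{\overline{j}}(t)\frac{\partial}{\partial z^i}$ of $T^{0,1}M_t$; your choice matches the usage in the paper, e.g.\ the deformed coframes $\eta^j_{\mathbf{t}}=e^{\iota_{\phi}|\iota_{\overline{\phi}}}(\eta^j)=\eta^j+t_j\overline{\eta}^j$ in Section~\ref{sec:appl}, so no gap remains.
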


Hence, once we fix $t\in I$, we can see any $(p,q)$-form $\alpha_t$ on $M_t$ as the image $e^{\iota_{\phi(t)}|\iota_{\overline{\phi}(t)}}(\alpha)$ of a certain $\alpha\in\mathcal{A}^{p,q}(M)$.

Exploiting Lemma~\ref{lemma:rao-zhao}, the Maurer-Cartan equation \eqref{eq:MC_phi} for the integrability of $J_t$ can be restated in the following versatile terms. Let $\{\eta^1,\dots,\eta^n\}$ be a global frame of $(1,0)$-forms on $(M,J)$, and $\{\eta_{t}^j:=e^{\iota_{\phi(t)}|\iota_{\overline{\phi}(t)}}(\eta^j)\}_{j=1}^n$ the corresponding frame of $(1,0)$-forms on $M_t$. Then $J_t$ defines an integrable complex structure on $M$ if
\begin{equation*}
(d\eta_t^j)^{0,2}=0,\qquad \text{for every}\, j\in\{1,\dots,n\},
\end{equation*}
where $(d\eta_t^j)^{0,2}$ denotes the $(0,2)$-component of $d\eta_t^j$ with respect to the decomposition given by
\begin{equation*}
\mathcal{A}_{\C}^k(M)=\bigoplus_{p+q=k}\mathcal{A}^{p,q}(M_t).
\end{equation*}
Note that this condition is equivalent to \eqref{eq:MC_phi}.

We end this section  by recalling the formulas for the operators $\del_t$ and $\delbar_t$ acting on $(p,q)$ forms on each $M_t$. We will usually omit the dependence on $t$ of the $(0,1)$-vector form $\phi(t)$.

Let $\{M_t\}_{t\in I}$ be a curve of deformations of $(M,J)$ such that each $M_t=(M,J_t)$ and each complex structure $J_t$ is parametrized by a $(0,1)$-vector form $\phi(t)$, $t\in I$, on $(M,J)$. Then, from the expression of $d$ at that level of $(p,q)$-forms on each $M_t$, i.e., 
\begin{equation*}
d\colon \mathcal{A}^{p,q}(M_t)\rightarrow \mathcal{A}^{p+1,q}(M_t)\oplus \mathcal{A}^{p,q+1}(M_t)
\end{equation*}
the operators $\del_t$ and $\delbar_t$ are, by definition, $\del_t:=\pi_t^{p+1,q}\circ d$ and $\delbar_t:=\pi_t^{p,q+1}\circ d$.
In \cite{RZ18}, the authors provides the formulas for the action of such operators on both functions and $(p,q)$-forms. Let then $f\colon M\rightarrow\C$ be a smooth function on $M$. Then
\begin{gather*}
\del_t f=e^{i_\phi}\left((I-\phi\overline{\phi})^{-1}\intprod(\del-\overline{\phi}\intprod\delbar)f\right)\\
\delbar_t f=e^{\iota_{\overline{\phi}}}\left((I-\overline{\phi}\phi)^{-1}\intprod(\delbar-\phi\intprod\del)f\right),
\end{gather*}
where we use the notations $\phi\overline{\phi}=\overline{\phi}\intprod\phi$, $\overline{\phi}\intprod\phi=\phi\intprod\overline{\phi}$, and $I$ is the identity map (see \cite[Formula 2.13]{RZ18}). In particular, a map $f\colon M\rightarrow \C$ is holomorphic with respect to the complex structure $J_t$ if, and only if, $$\del f -(\overline{\phi(t)}\intprod\delbar)f=0.$$

We recall that the \emph{simultaneous contraction} $\Finv$ by a $(0,1)$-vector form $\phi$ (or any operator) acts on a $(p,q)$ form $\alpha$ as
\begin{equation*}
\phi\Finv\alpha:=\alpha_{i_1\dots i_{p}j_1\dots j_{q}}\phi\intprod dz^{i_1}\wedge \dots \wedge \phi\intprod dz^{i_p}\wedge \phi \intprod d\overline{z}^{j_1}\wedge\dots\wedge \phi\intprod d\overline{z}^{j_q},
\end{equation*}
where $\alpha_{i_1\dots i_{p}j_1\dots j_{q}} dz^{i_1}\wedge\dots\wedge dz^{i_p}\wedge d\overline{z}^{j_1}\wedge\dots \wedge d\overline{z}^{j_q}$ is the local expression for $\alpha=$. Such a contraction is well defined and can be used to rewrite the exponential map as
\begin{equation}\label{eq:simcont_extension}
e^{\iota_{\phi}|\iota_{\overline{\phi}}}=(I+\phi+\overline{\phi})\Finv.
\end{equation}
Then, from \cite[Prop 2.13]{RZ18}, we obtain that the action of $\del_t$ and $\delbar_t$ on any $e^{\iota_{\phi}|\iota_{\overline{\phi}}}\alpha\in\mathcal{A}^{p,q}(M_t)$, for $\alpha\in\mathcal{A}^{p,q}(M)$, is defined as
\begin{gather}
\del_t(e^{\iota_{\phi}|\iota_{\overline{\phi}}}\alpha)=e^{\iota_{\phi}|\iota_{\overline{\phi}}}\left((I-\phi\overline{\phi})^{-1}\Finv([\delbar,\iota_{\overline{\phi}}]+\del)(I-\phi\overline{\phi})\Finv\alpha\right)\label{delt}\\
\delbar_t(e^{\iota_{\phi}|\iota_{\overline{\phi}}}\alpha)=e^{\iota_{\phi}|\iota_{\overline{\phi}}}\left((I-\overline{\phi}\phi)^{-1}\Finv([\del,\iota_{\phi}]+\delbar)(I-\overline{\phi}\phi)\Finv\alpha\right).\label{delbart}
\end{gather}
\section{Deformations of Astheno-K\"ahler metrics}\label{sec:main}

Let $(M,J)$ be a compact complex manifold of complex dimension $n$ endowed with an astheno-K\"ahler metric $g$, i.e., its associated fundamental form $\omega$ satisfies $\del\delbar\omega^{n-2}=0$, and let $\{M_t\}_{t\in I}$ be a curve of deformations of $(M,J)$, with $\{M_t\}_{t\in I}$ parametrized by a $(0,1)$-vector form $\phi(t)$ on $M$. Let also $\{\omega_t\}_{t\in I}$ be a smooth family of Hermitian metrics along $\{M_t\}_{t\in I}$, such that $\omega_0=\omega$ and suppose that $g_t$ is balanced on $M_t$, for every $t\in I$, i.e, 
\begin{equation}\label{eq:astheno_omega_t=0}
\del_t\delbar_t\omega_t^{n-2}=0, \quad \forall t\in I.
\end{equation}
By Lemma \ref{lemma:rao-zhao}, we can write each $\omega_t$ as $e^{\iota_{\phi}|\iota_{\overline{\phi}}}\omega(t)$, where for every $t\in I$, $\omega(t)=\omega_{ij}(t)dz^i\wedge d\overline{z}^j\in\mathcal{A}^{1,1}(M)$ locally, and also
\begin{equation}
\omega_t^{n-2}=e^{\iota_{\phi}|\iota_{\overline{\phi}}}(\omega^{n-2}(t))=e^{\iota_{\phi}|\iota_{\overline{\phi}}}\left(f_v(t)dz^{i_1}\wedge d\overline{z}^{j_1}\wedge\dots\wedge dz^{i_{n-2}}\wedge d\overline{z}^{j_{n-2}}\right),
\end{equation}
with $f_v(t):=\omega_{i_{1} j_{1}}(t)\dots\omega_{i_{n-2}j_{n-2}}(t)$, where we have set $v=(i_1,j_1,\dots,i_{n-2},j_{n-2})$, $i_k,j_k\in\{1,\dots,n\}$,  and $k\in\{1,\dots, n-2\}$.

We can then apply formulas \eqref{delt} and \eqref{delbart} to \eqref{eq:astheno_omega_t=0}, and by making use of Taylor series expansion and differentiating with respect to $t$ in $t=0$, we are able to prove the main theorem.

\begin{theorem}\label{thm:def_curve_astheno}
Let $(M,J)$ be a $n$-dimensional compact complex manifold endowed with an astheno-K\"ahler metric $g$ and associated fundamental form $\omega$. Let $\{M_t\}_{t\in I}$ be a differentiable family of compact complex manifolds with $M_0=M$ and parametrized  by $\phi(t)\in\A^{0,1}(T^{1,0}(M))$, for $t\in I:=(-\epsilon,\epsilon)$, $\epsilon>0$. Let $\{\omega_t\}_{t\in I}$ be a smooth family of Hermitian metrics along $\{M_t\}_{t\in I}$, written as
\begin{equation*}
\omega_t=e^{\iota_{\phi}|\iota_{\overline{\phi}}}\,\,(\omega(t)),
\end{equation*}
where, locally,  $\omega(t)=\omega_{ij}(t)\, dz^i\wedge d\overline{z}^j\in\mathcal{A}^{1,1}(M)$ and $\omega_0=\omega$.

If $\omega_t^{n-2}$ has local expression $e^{\iota_{\phi}|\iota_{\overline{\phi}}}(\omega^{n-2}(t))=e^{\iota_{\phi}|\iota_{\overline{\phi}}}(f_v(t)\,dz^{i_1}\wedge d\overline{z}^{j_1}\wedge\dots\wedge dz^{i_{n-2}}\wedge d\overline{z}^{j_{n-2}})$, set
\[(\omega^{n-2}(t))':=\de{}{t}(f_v(t))\, dz^{i_1}\wedge d\overline{z}^{j_1}\wedge\dots dz^{i_{n-2}}\wedge d\overline{z}^{j_{n-2}}\in\A^{n-2,n-2}(M).
\]
Then, if every metric $\omega_t$ is astheno-K\"ahler, for $t\in I$, it must hold that
\begin{equation}\label{eq_thm_def_curve_astheno}
2i\mathfrak{Im}(\del\circ \iota_{\phi'(0)}\circ\del) (\omega^{n-2})=\del\delbar (\omega^{n-2}(0))'.
\end{equation}
\end{theorem}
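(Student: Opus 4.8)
The plan is to substitute the Rao--Zhao formulas \eqref{delt} and \eqref{delbart} into the astheno-K\"ahler condition \eqref{eq:astheno_omega_t=0}, pull everything back to the central fibre by the isomorphism of Lemma~\ref{lemma:rao-zhao}, and then differentiate the resulting identity once at $t=0$. Concretely, I would first apply \eqref{delbart} to $\omega_t^{n-2}=e^{\iota_\phi|\iota_{\overline{\phi}}}(\omega^{n-2}(t))$ to write $\delbar_t\omega_t^{n-2}=e^{\iota_\phi|\iota_{\overline{\phi}}}(\gamma(t))$ with
\[
\gamma(t)=(I-\overline{\phi}\phi)^{-1}\Finv\big([\del,\iota_\phi]+\delbar\big)(I-\overline{\phi}\phi)\Finv\,\omega^{n-2}(t),
\]
and then apply \eqref{delt} once more, obtaining $\del_t\delbar_t\omega_t^{n-2}=e^{\iota_\phi|\iota_{\overline{\phi}}}(F(t))$ with
\[
F(t)=(I-\phi\overline{\phi})^{-1}\Finv\big([\delbar,\iota_{\overline{\phi}}]+\del\big)(I-\phi\overline{\phi})\Finv\,\gamma(t).
\]
Since $e^{\iota_\phi|\iota_{\overline{\phi}}}$ is an isomorphism, \eqref{eq:astheno_omega_t=0} is equivalent to the identity $F(t)=0$ in $\A^{n-1,n-1}(M)$ for all $t\in I$, so in particular $F'(0)=0$.

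The differentiation is governed by the single fact that $\phi(0)=0$, whence $\phi(t)=t\,\phi'(0)+O(t^2)$ and $\overline{\phi}(t)=t\,\overline{\phi'(0)}+O(t^2)$. Consequently every compound operator of the form $(I\mp\phi\overline{\phi})^{\pm1}\Finv$ or $(I\mp\overline{\phi}\phi)^{\pm1}\Finv$ equals $I+O(t^2)$, because $\phi\overline{\phi}$ and $\overline{\phi}\phi$ are quadratic in $\phi,\overline{\phi}$; such factors therefore contribute nothing to a first-order derivative at $0$ and may be replaced by the identity. Differentiating $\gamma$ by the Leibniz rule, using $[\del,\iota_\phi]\big|_{t=0}=0$ and the crucial observation that the $t$-derivative of $(I-\overline{\phi}\phi)\Finv\,\omega^{n-2}(t)$ at $0$ picks up only the coefficient derivative $(\omega^{n-2}(0))'$ (the frame-deforming operator being $I+O(t^2)$), I would get
\[
\gamma'(0)=[\del,\iota_{\phi'(0)}]\,\omega^{n-2}+\delbar(\omega^{n-2}(0))'.
\]
Feeding this into the analogous expansion of $F$, where $\gamma(0)=\delbar\omega^{n-2}$ and $[\delbar,\iota_{\overline{\phi}}]\big|_{t=0}=0$, yields
\[
F'(0)=[\delbar,\iota_{\overline{\phi'(0)}}]\,\delbar\omega^{n-2}+\del\big([\del,\iota_{\phi'(0)}]\,\omega^{n-2}\big)+\del\delbar(\omega^{n-2}(0))'=0.
\]

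It then remains to simplify. Expanding the two commutators and using $\del^2=\delbar^2=0$ collapses the first two summands to $\delbar\,\iota_{\overline{\phi'(0)}}\,\delbar\omega^{n-2}-\del\,\iota_{\phi'(0)}\,\del\omega^{n-2}$. Here the reality of $\omega$, hence of $\omega^{n-2}$, is decisive: since conjugation interchanges $\del\leftrightarrow\delbar$ and $\iota_{\phi'(0)}\leftrightarrow\iota_{\overline{\phi'(0)}}$, one has $\delbar\,\iota_{\overline{\phi'(0)}}\,\delbar\omega^{n-2}=\overline{\del\,\iota_{\phi'(0)}\,\del\omega^{n-2}}$, so the two terms combine as $-\big(\del\iota_{\phi'(0)}\del\omega^{n-2}-\overline{\del\iota_{\phi'(0)}\del\omega^{n-2}}\big)=-2i\,\mathfrak{Im}\big(\del\circ\iota_{\phi'(0)}\circ\del\big)(\omega^{n-2})$. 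Substituting into $F'(0)=0$ and rearranging gives exactly \eqref{eq_thm_def_curve_astheno}.

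I expect the main obstacle to be the bookkeeping in the nested differentiation of $\del_t\delbar_t$: one must track carefully which of the $\Finv$-type factors survive to first order and correctly isolate, among the several Leibniz terms, the single contribution that reproduces the coefficient derivative $(\omega^{n-2}(0))'$. The reality step is short once the commutators are reduced, but keeping the signs consistent throughout (the $\pm$ from the graded commutators together with $z-\overline{z}=2i\,\mathfrak{Im}(z)$) is where errors are most likely.
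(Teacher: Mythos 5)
Your proposal is correct and follows essentially the same route as the paper's proof: substituting the Rao--Zhao formulas \eqref{delt} and \eqref{delbart}, Taylor-expanding to first order using $\phi(0)=0$ so that all $(I\mp\phi\overline{\phi})^{\pm1}$-type factors reduce to $I+o(t)$, differentiating at $t=0$, collapsing the commutators via $\del^2=\delbar^2=0$, and invoking the reality of $\omega^{n-2}$ to produce the $2i\,\mathfrak{Im}$ term. The only cosmetic difference is that you quotient out the extension map $e^{\iota_\phi|\iota_{\overline{\phi}}}$ at the outset (using that it is an isomorphism, so the condition becomes $F(t)=0$ on the central fibre), whereas the paper carries the factor $(I+t\phi'(0)+t\overline{\phi'(0)})\Finv$ through the expansion and discards it as higher order.
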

As a direct consequence, we immediately have the following corollary.
\begin{cor}\label{cor:def_curve_astheno}
Let $(M,J)$ be a compact Hermitian manifold endowed with an asteno-K\"ahler metric $g$ and associated fundamental form $\omega$. If there exists a smooth family of astheno-K\"ahler metrics which coincides with $\omega$ in $t=0$, along the family of deformations $\{M_t\}_t$ with $M_0=M$ and parametrized by the $(0,1)$-vector form $\phi(t)$ on $M$, then the following equation must hold
\begin{equation}\label{eq_cor_def_curve_astheno}
\left[(\del\circ \iota_{\phi'(0)}\circ\del) (\omega^{n-2})\right]_{H_{BC}^{n-1,n-1}(M)}=0.
\end{equation}
\end{cor}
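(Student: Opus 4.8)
The plan is to derive the Corollary by transporting the identity of Theorem~\ref{thm:def_curve_astheno} into Bott--Chern cohomology. Write $\eta:=(\del\circ\iota_{\phi'(0)}\circ\del)(\omega^{n-2})\in\A^{n-1,n-1}(M)$ and $\gamma:=(\omega^{n-2}(0))'\in\A^{n-2,n-2}(M)$; since $\omega$ and hence $\omega^{n-2}$ are real and the family $\omega_t^{n-2}$ is real for every $t$, the form $\gamma$ is real as well. Conjugation sends $\del\mapsto\delbar$ and $\iota_{\phi'(0)}\mapsto\iota_{\overline{\phi'(0)}}$, so $\overline{\eta}=(\delbar\circ\iota_{\overline{\phi'(0)}}\circ\delbar)(\omega^{n-2})$ and the conclusion of the Theorem reads $\eta-\overline{\eta}=\del\delbar\gamma$ (the left-hand side being exactly $2i\,\mathfrak{Im}(\eta)$). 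To make sense of $[\eta]_{H_{BC}^{n-1,n-1}(M)}$ I first have to check that $\eta$ is $d$-closed, and then that its class is trivial.

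For $d$-closedness I would argue as follows. As $\eta$ is by construction $\del$ of a form, $\del\eta=0$ is immediate from $\del^2=0$. For $\delbar\eta=0$ I would apply $\delbar$ to the Theorem's identity: $\delbar\eta=\delbar\overline{\eta}+\delbar\del\delbar\gamma$, where the first term vanishes because $\overline{\eta}=\delbar(\iota_{\overline{\phi'(0)}}\delbar\omega^{n-2})$ is $\delbar$-exact, and the second because $\delbar\del\delbar=-\del\delbar\delbar=0$. Hence $\eta\in\Ker(d)$ and $[\eta]_{H_{BC}^{n-1,n-1}(M)}$ is well defined.

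To establish that this class vanishes I would exhibit $\eta$ simultaneously as a $\del$-exact and a $\delbar$-exact form. By construction $\eta=\del(\iota_{\phi'(0)}\del\omega^{n-2})$ is $\del$-exact. Using $\del\delbar=-\delbar\del$, the Theorem's identity rewrites as $\eta=\overline{\eta}+\del\delbar\gamma=\delbar\bigl(\iota_{\overline{\phi'(0)}}\delbar\omega^{n-2}-\del\gamma\bigr)$, which displays the same form $\eta$ as $\delbar$-exact. From the resulting $d$-closed, $\del$-exact and $\delbar$-exact representative I would read off $[\eta]_{H_{BC}^{n-1,n-1}(M)}=0$.

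The step requiring the most care is this last reading-off. On an arbitrary compact complex manifold, neither $\del$-exactness nor $\delbar$-exactness of a $d$-closed form forces membership in $\im(\del\delbar)$; that implication is equivalent to the $\del\delbar$-lemma, which is not in force here. What Theorem~\ref{thm:def_curve_astheno} grants unconditionally is that $\mathfrak{Im}(\eta)=\tfrac{1}{2i}\del\delbar\gamma$ is $\del\delbar$-exact, so that $[\eta]_{H_{BC}^{n-1,n-1}(M)}=[\mathfrak{Re}(\eta)]_{H_{BC}^{n-1,n-1}(M)}$; the real point is to absorb $\mathfrak{Re}(\eta)$ as well, leaning on the $\del$-exactness of $\eta$ and the reality of $\omega^{n-2}$ and $\gamma$. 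Everything else reduces to the bookkeeping of $d$-closedness recorded above, so this is where I would concentrate the argument.
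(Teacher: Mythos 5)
Your computations are all correct, and the point at which you stopped is a genuine gap --- but it is a gap that cannot be closed, because it is a defect of the statement rather than of your argument. What Theorem~\ref{thm:def_curve_astheno} yields is exactly what you extracted: writing $\eta:=(\del\circ\iota_{\phi'(0)}\circ\del)(\omega^{n-2})$, the form $2i\,\mathfrak{Im}(\eta)=\eta-\overline{\eta}=\del\delbar\gamma$ is $d$-closed and $\del\delbar$-exact, hence $[\mathfrak{Im}(\eta)]_{BC}=0$, equivalently $[\eta]_{BC}=[\mathfrak{Re}(\eta)]_{BC}$, i.e.\ the class is real. Your further observation that $\eta$ is $d$-closed, $\del$-exact and (via the Theorem) $\delbar$-exact only controls exactness, not membership in $\im(\del\delbar)$: promoting it to $[\eta]_{BC}=0$ is an instance of the $\del\delbar$-lemma, which fails precisely on the manifolds this corollary is aimed at (the non-toral nilmanifolds of Section~\ref{sec:appl} never satisfy it). So no completion of your final step exists from the Theorem alone.

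The comparison with the paper settles the matter: the paper offers no proof of Corollary~\ref{cor:def_curve_astheno} beyond the phrase ``as a direct consequence,'' and the direct consequence is the statement you proved, not \eqref{eq_cor_def_curve_astheno} as printed, which is missing an $\mathfrak{Im}$ (equivalently, it should assert $[\eta]_{BC}=[\overline{\eta}]_{BC}$). The paper's own applications confirm this reading. In Example~\ref{example1}, case (ii), where $\eta=2(|a_7|^2-|a_4|^2)\frac{a_1u_2}{a_4}\eta^{123\overline{123}}$ and $[\eta^{123\overline{123}}]_{BC}\neq 0$, the recorded obstruction is $(|a_4|^2-|a_7|^2)\mathfrak{Re}(a_1u_2/a_4)\neq 0$; since $\overline{\eta^{123\overline{123}}}=-\eta^{123\overline{123}}$, this real-part condition is precisely $[\mathfrak{Im}(\eta)]_{BC}\neq 0$, whereas the literal condition $[\eta]_{BC}\neq 0$ would read $(|a_4|^2-|a_7|^2)\,a_1u_2\neq 0$, which is not what the paper requires. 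Later in Example~\ref{example1} and again in Example 2 the paper even writes $[\mathfrak{Im}(\del\circ\iota_{\phi'(0)}\circ\del(\omega^2))]_{BC}$ explicitly. In short: your proof of the corrected statement is complete and coincides with the paper's implicit one-line argument; the residual step you flagged should be regarded as an erratum to the statement of the Corollary, not as missing mathematics on your side.
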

\begin{proof}[Proof of Theorem \ref{thm:def_curve_astheno}] The metrics $\omega_t$ are astheno-K\"ahler for every $t\in I$, i.e., $\del_t\delbar_t\omega_t^{n-2}=0$. This implies
\begin{equation}\label{eq:ddt(del_tdelbar_t)omega_n-2}
\frac{\del}{\del t}(\del_t\delbar_t\omega_t^{n-2})=0.
\end{equation}
Let us then compute the right hand side of \eqref{eq:ddt(del_tdelbar_t)omega_n-2} through formulas \eqref{delt} and \eqref{delbart} for the operators $\del_t$ and $\delbar_t$. By the extension map we have that
\begin{equation*}
\del_t\delbar_t(\omega_t^{n-2})=\del_t\delbar_t(e^{i_\phi|\iota_{\overline{\phi}}}(\omega^{n-2}(t))),
\end{equation*}
and then, by \eqref{delt} and \eqref{delbart}, we have
\begin{align*}
&\del_t\delbar_t(e^{\iota_{\phi}|\iota_{\overline{\phi}}}(\omega^{n-2}(t))=\del_t(e^{\iota_{\phi}|\iota_{\overline{\phi}}}\left((I-\overline{\phi}\phi)^{-1}\Finv([\del,\iota_{\phi}]+\delbar)(I-\overline{\phi\phi})\Finv\,\omega^{n-2}(t)\right))\\
&= e^{\iota_{\phi}|\iota_{\overline{\phi}}}\Biggl(\Bigl((I-\phi\overline{\phi})^{-1}\Finv([\delbar,\iota_{\overline{\phi}}]+\del)(I-\phi\overline{\phi})\Bigr)\Finv\Bigl((I-\overline{\phi}\phi)^{-1}\Finv([\del,\iota_{\phi}]+\delbar)(I-\overline{\phi}\phi)\Finv\,\omega^{n-2}(t)\Bigr) \Biggr).
\end{align*}
Now, we expand in Taylor series centered in $t=0$ the terms
\[
\phi(t)=t\phi'(0)+o(t), \quad \omega^{n-2}(t)=\omega^{n-2}(0)+t\omega^{n-2}(0)'+o(t)
\]
so that
\begin{equation*}
(I-\phi\overline{\phi})^{-1}=(I-\overline{\phi}\phi)^{-1}=(I-\phi\overline{\phi})=(I-\overline{\phi}\phi)=I+o(t)
\end{equation*}
Combining with \eqref{eq:simcont_extension} and usig the notation $\intprod$, we obtain that
\begin{align*}
&\del_t\delbar_t\omega_t^{n-2}=\left(I+t\phi'(0)+\overline{t\phi}'(0)\right)\Finv\,\left([\delbar,\overline{t\phi'(0)}\intprod]+\del\right)\left([\del,t\phi'(0)\intprod]+\delbar\right)\left(\omega^{n-2}(0)+t(\omega^{n-2}(0))'\right)+o(t)\\
&=\left(I+t\phi'(0)+\overline{t\phi}'(0)\right)\Finv\,\left([\delbar,\overline{t\phi'(0)}\intprod]+\del\right)\left([\del,t\phi'(0)\intprod]\omega^{n-2}(0)+\delbar\omega^{n-2}(0)+t\delbar(\omega^{n-2}(0))'\right)+o(t)\\
&=\left(I+t\phi'(0)+\overline{t\phi}'(0)\right)\Finv\,\left(-t\del(\phi'(0)\intprod\omega^{n-2}(0))+t\delbar(\overline{\phi'(0)}\intprod\delbar\omega^{n-2}(0))+t\del\delbar(\omega^{n-2}(0))'\right)+o(t)\\
&= -t\del(\phi'(0)\intprod\del\omega^{n-2}(0))+t\delbar(\overline{\phi'(0)}\intprod\delbar\omega^{n-2}(0))+t\del\delbar(\omega^{n-2}(0))'+o(t).
\end{align*}
Now, since $\del_t\delbar_t\omega_t^{n-2}=0$, for every $t\in I$, also, $\frac{\del}{\del t}\restrict{t=0}(\del_t\delbar_t\omega_t^{n-2})=0$, hence
\begin{equation*}
-\del(\phi'(0)\intprod \del\omega^{n-2}(0))+\delbar(\overline{\phi'(0)}\intprod\delbar\omega^{n-2}(0))+\del\delbar(\omega^{n-2}(0))'=0,
\end{equation*}
which is equivalent to 
\begin{equation*}
-(\del\circ \iota_{\phi'(0)}\circ\del) \omega^{n-2}+(\delbar\circ \iota_{\overline{\phi'(0)}}\circ\delbar) \omega^{n-2}+\del\delbar(\omega^{n-2}(0))'=0,
\end{equation*}
hence, concluding the proof.
\end{proof}

\section{Nilmanifolds with nilpotent complex structure}\label{sec:nilm_nilp_cplx_struct}
In this section, we briefly recall the basic notions on the geometry of nilmanifolds endowed with invariant complex structure.

A \emph{nilmanifold} $M$ is the datum of a quotient $M=\Gamma\backslash G$ of a simply connected connected Lie group $G$ by a discrete uniform subgroup $\Gamma$, such that the Lie algebra $\mathfrak{g}$ associated to the Lie group $G$ is nilpotent, i.e., the series $\{\mathfrak{g}^{(k)}\}$ ends in $\{0\}$, where
\begin{equation*}
\mathfrak{g}^{(0)}:=\mathfrak{g}, \qquad \mathfrak{g}^{(1)}:=[\mathfrak{g},\mathfrak{g}], \quad \mathfrak{g}^{(j)}:=[\mathfrak{g},\mathfrak{g}^{(j-1)}].
\end{equation*} 
An \emph{invariant complex structure} $J$ on a nilmanifold $M$ is a left-invariant integrable almost complex structure on $G$, which descendens to the quotient $M$. Such complex structures on $M$ can be characterized by a set of linearly independent left-invariant forms $\{\eta^j\}_j$ on the Lie group $G$ such that
$$
\mathfrak{g}_\C^*=\Span_\C\langle\eta^j\rangle_j\otimes \Span_\C\langle\overline{\eta}^k\rangle_k,
$$
where $\mathfrak{g}_\C^*$ is the space of left-invariant complex forms on $G$.
By left-invariance, the coframe $\{\eta^j\}_j$ then descends to the quotient $M$.

Therefore, one can construct nilmanifolds by setting
\[
\mathfrak{g}_{\C}^{*}:=\Span_{\C}\langle\eta^{j}\rangle_{j}\oplus \Span_{\C}\langle\overline{\eta}^j\rangle_{j}
\]
where $\{\eta^j\}_j\subset \mathfrak{g}_{\C}^*$ a set of linearly independent complex covectors with structure equations
\[
d\eta^{j}=\sum_{i<k}A_{ik}^j\eta^{i}\wedge\eta^k+\sum_{i,k}B_{i\overline{k}}^j\eta^{i}\wedge\overline{\eta}^k,
\]
satisfying
\begin{itemize}
\item $d^2\equiv 0$;
\item the \emph{constant structure} $A_{ik}^j$, $B_{i\overline{k}}^j\in\q[i]$;
\item if $\{Z_j\}_j$ is the dual base of $\{\eta^j\}_j$, through the relations
$$
[Z_j,Z_k]=A_{jk}^iZ_i,\quad [Z_j,\overline{Z}_k]=(B_{j\overline{k}}^iZ_i-B_{k\overline{j}}^i\overline{Z}_i),
$$
it holds that $\mathfrak{g}^{(k_0)}=\{0\}$, for some $k_0$.
\end{itemize}
Indeed, under these hypotesis, the Lie algebra $\mathfrak{g}$ underlying $\mathfrak{g}_{\C}^*$ is nilpotent, and its corresponding Lie group $G$ admits a discrete uniform subgroup $\Gamma$ by Malcev theorem, so that $(M=\Gamma\backslash G,J)$ is a nilmanifold endowed with an invariant complex structure $J$ and
\[
\mathfrak{g}_{\C}=\mathfrak{g}^{1,0}\oplus \mathfrak{g}^{0,1},\qquad  \mathfrak{g}_{\C}^*=\mathfrak{g}^{*(1,0)}\oplus\mathfrak{g}^{*(0,1)}
\]
where
\[
\mathfrak{g}^{1,0}=\Span_{\C}\langle Z^j\rangle_j, \quad \mathfrak{g}^{0,1}=\Span_{\C}\langle\overline{Z}^j\rangle_j, \quad \mathfrak{g}^{*(1,0)}=\Span_{\C}\langle \eta^j\rangle_j, \quad \mathfrak{g}^{*(0,1)}=\Span_{\C}\langle\overline{\eta}^j\rangle_j.
\]
In particular, we say that the complex structure $J$ on $M$ is
\begin{itemize}
\item \emph{holomorphically parallelizable} if
\[
d\mathfrak{g}^{*(1,0)}\subset\mathfrak{g}^{*(2,0)}
\]
\item \emph{nilpotent} if there exists a coframe of left-invariant $(1,0)$-forms $\{\eta^j\}$ on $G$ such that, for every $j$,
\[
d\eta^j=\sum_{i<k<j}A_{ij}\eta^i\wedge\eta^k +\sum_{i,k<j}B_{i\overline{k}}\eta^i\wedge\overline{\eta}^k, \qquad A_{ik}^j,B_{i\overline{k}}^j\in\C,
\]
\item \emph{abelian} if $$d\mathfrak{g}^{*(1,0)}\subset \mathfrak{g}^{*(1,1)}.$$
\end{itemize}
\section{Applications}\label{sec:appl}
We now apply Corollary \ref{cor:def_curve_astheno}, providing examples of obstructions to the existence of curve of astheno-K\"ahler metrisc on two families of $4$-dimensional nilmanifolds.
\subsection{Example 1}\label{example1}
Let $(M=\Gamma\backslash G,J)$ be the $4$-dimensional nilmanifold with left-invariant complex structure $J$ induced by the covectors $\{\eta^1,\eta^2,\eta^3,\eta^4\}$ on the complexified dual $\mathfrak{g}_{\C}^*$ of the Lie algebra $\mathfrak{g}$ of $G$, which satisfy structure equations
\begin{align}\label{eq:struct_eq_def_astheno_0}
\begin{cases}
\,\,d\eta^i&=0, \quad i\in \{1,2,3\},\\
\,\,d\eta^4&=a_1\eta^{12}+a_2\eta^{13}+a_3\eta^{1\overline{1}}+a_4\eta^{1\overline{2}}+a_5\eta^{1\overline{3}}\\
\,\,&+a_6\eta^{23}+
+a_7\eta^{2\overline{1}}+a_8\eta^{2\overline{2}}+a_9\eta^{2\overline{3}}\\
\,\,&+a_{10}\eta^{3\overline{1}}+a_{11}\eta^{3\overline{2}}+a_{12}\eta^{3\overline{3}},
\end{cases}
\end{align}
with $a_j\in\q[i]$, for every $j\in\{1,2,\dots,12\}$. Note that $J$ is a nilpotent complex structure on $M$.

If $\omega=\frac{i}{2}\sum_{j=1}^4\eta^{j\overline{j}}$ is the fundamental form associated to the diagonal metric $g$ on $M$, then $\omega$ is astheno-K\"ahler i.e., 
\begin{equation*}
\del\delbar\omega^{2}=0
\end{equation*} 
if, and only if,
\begin{equation*}
|a_1|^2+|a_2|^2+|a_4|^2+|a_5|^2+|a_6|^2+|a_7|^2+|a_9|^2+|a_{10}|^2+|a_{11}|^2=2\mathfrak{Re}(a_3\overline{a}_8+a_3\overline{a}_{12}+a_8\overline{a}_{12}).
\end{equation*}
\begin{rem}\label{hol_par_astheno}
If the complex manifold $(M,J)$ is holomorphically parallelizable, i.e., 
\[
a_3=a_4=a_5=a_7=a_8=a_9=a_9=a_{10}=a_{11}=a_{12}=0,
\]
then metric $g$ is astheno K\"ahler on $(M,J)$ if, and only if, also $a_3=a_8=a_{12}=0$, i.e., $(M,J)$ is a complex torus. This is in line with the more general argument that on a compact holomorphically parallelizable manifold there exists a global coframe of holomorphic $(1,0)$-form; however, if a manifold admits an astheno-K\"ahler metric, every holomorphic $1$-form is $d$-closed. Therefore, on a holomorphically parallelizable manifold endowed with an astheno-K\"ahler metric, each form of the global holomorphic coframe is $d$-closed, hence the manifold is a torus.
\end{rem}
Let $\{Z_1,Z_2,Z_3,Z_4\}$ be the dual frame of $\{\eta^1,\eta^2,\eta^3,\eta^4\}$ and define the smooth $(0,1)$-vector form $\phi(\mathbf{t})$ on $(M,J)$ by
\begin{equation}\label{eq:vector_def_astheno}
\phi(\mathbf{t}):=t_1\overline{\eta}^1\otimes Z_1+t_2\overline{\eta}^2\otimes Z_2+t_3\overline{\eta}^3\otimes Z_3, \qquad \mathbf{t}\in B:=\{(t_1,t_2,t_3)\in\C^3: |t_j|<1\},
\end{equation}
which parametrizes a family of (non necessarily integrable) deformations $\{(M,J_t)\}_{t\in B}$ of $(M,J)$. By Lemma \ref{lemma:rao-zhao}, each (almost) complex structure $J_t$ can be characterized by the coframe $\{\eta_{\mathbf{t}}^1,\eta_{\mathbf{t}}^2,\eta_{\mathbf{t}}^3,\eta_{\mathbf{t}}^4\}$ given by
\begin{align}\label{eq:struct_eq_def_astheno_1}
\begin{cases}
\,&\eta_{\mathbf{t}}^1=\eta^1+t_1\overline{\eta}^1 \vspace{0.2cm}\\
&\eta_{\mathbf{t}}^2=\eta^2+t_2\overline{\eta}^2 \vspace{0.2cm}\\
&\eta_{\mathbf{t}}^3=\eta^3+t_3\overline{\eta}^3 \vspace{0.2cm}\\
&\eta_{\mathbf{t}}^4=\eta^4,
\end{cases}
\end{align}
which yields, by inverting the system,
\begin{align}\label{eq:struct_eq_def_astheno_2}
\begin{cases}
\,&\eta^1=\frac{1}{1-|t_1|^2}(\eta_{\mathbf{t}}^1-t_1\overline{\eta}_{\mathbf{t}}^1) \vspace{0.3cm}\\
&\eta^2=\frac{1}{1-|t_2|^2}(\eta_{\mathbf{t}}^2-t_2\overline{\eta}_{\mathbf{t}}^2) \vspace{0.3cm}\\
&\eta^3=\frac{1}{1-|t_3|^2}(\eta_{\mathbf{t}}^3-t_3\overline{\eta}_{\mathbf{t}}^3) \vspace{0.3cm}\\
&\eta^4=\eta_{\mathbf{t}}^4.
\end{cases}
\end{align}
Set $T_j:=\frac{1}{1-|t_j|^2}$, for $j\in\{1,2,3\}$. 
As recalled in the previous section, $\phi(\mathbf{t})$ parametrizes a family of compact complex manifold, i.e., each $J_t$ defines an integrable almost complex structure on $M$, if, and only if,
\begin{equation*}
(d\eta_{\mathbf{t}}^j)^{0,2}=0, \qquad j\in\{1,2,3,4\}.
\end{equation*}
By relations \eqref{eq:struct_eq_def_astheno_1}, \eqref{eq:struct_eq_def_astheno_2} and structure equations \eqref{eq:struct_eq_def_astheno_0}, it turns out that such an integrability condition  holds if, and only if
\begin{align*}
(d\eta_{\mathbf{t}}^4)^{0,2}&= T_1T_2(a_1t_1t_2-a_4t_1+a_7t_2)\eta_{\mathbf{t}}^{\overline{12}}\\
&+ T_1T_2(a_2t_1t_3-a_5t_1+a_{10}t_3)\eta_{\mathbf{t}}^{\overline{13}}\\
&+T_2T_3(a_6t_2t_3-a_9t_2+a_{11}t_3)\eta_{\mathbf{t}}^{\overline{23}}=0,
\end{align*}
i.e.,
\begin{align}\label{eq:integrability_def_astheno_nilp}
\begin{cases}
\, &a_1t_1t_2-a_4t_1+a_7t_2=0\\
&a_2t_1t_3-a_5t_1+a_{10}t_3=0\\
&a_6t_2t_3-a_9t_2+a_{11}t_3=0.
\end{cases}
\end{align}
Let us now fix a choice of $a_{j}\in\q[i]$ and let $S\subset B$ be the set of solutions of system \eqref{eq:integrability_def_astheno_nilp}. Then, for every $\mathbf{t}\in S$, $\phi(\mathbf{t})$ parametrizes a family of deformations $\{(M,J_{\mathbf{t}})\}_{\mathbf{t}\in S}$. We can construct a curve of deformations $\gamma\colon(-\epsilon,\epsilon)\rightarrow S$, with $\gamma(t):=(\phi_1(t),\phi_2(t),\phi_3(t))\in S$, 
so that, for every $t\in(-\epsilon,\epsilon)$, the $(0,1)$-vector form
\[
\phi(\gamma(t))=\phi_1(t)\overline{\eta}^{1}\otimes Z_1 + \phi_2(t)\overline{\eta}^2\otimes Z_2 + \phi_3(t)\overline{\eta}^3\otimes Z_3
\]
parametrizes a curve of deformations of $(M,J)$. (From now on, with an abuse of notation, we will write  $\phi(t):=\phi(\gamma(t))$.) We will then denote
\[
\phi'(0):=\phi_1'(0)\overline{\eta}^1\otimes Z_1+\phi_2'(0)\overline{\eta}^2\otimes Z_2 + \phi_2'(0)\overline{\eta}^3\otimes Z_3.
\]
We observe the following facts.
\begin{lemma}\label{lem:tecnico_astheno_def}
\begin{enumerate}
\item $(\del\circ  \iota_{\phi'(0)}\circ\del)\omega^2=C_{J,\phi'(0)}\eta^{123\overline{123}}$.
\item If $J$ is abelian, then $(\del\circ \iota_{\phi'(0)}\circ\del) \omega^2=0$.
\item The Bott-Chern cohomology class $[\eta^{123\overline{123}}]_{BC}\neq 0$ if, and only if, $\omega$ is SKT.
\end{enumerate}
\end{lemma}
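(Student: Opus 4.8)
The plan is to exploit the special structure equations \eqref{eq:struct_eq_def_astheno_0}: the forms $\eta^1,\eta^2,\eta^3$ and their conjugates are $d$-closed, so the only coframe element with nontrivial differential is $\eta^4$, whose differential splits as $d\eta^4=\del\eta^4+\delbar\eta^4$ with $\del\eta^4=a_1\eta^{12}+a_2\eta^{13}+a_6\eta^{23}$ the $(2,0)$-part and $\delbar\eta^4$ the $(1,1)$-part; crucially $\del\eta^4,\delbar\eta^4,\del\overline{\eta}^4,\delbar\overline{\eta}^4$ are all supported on the indices $\{1,2,3\}$. For (1), I record that $\omega^2=-\tfrac12\sum_{j<k}\eta^{j\overline{j}}\wedge\eta^{k\overline{k}}$ and that every summand not involving the index $4$ is $d$-closed, so only the three terms carrying $\eta^{4\overline{4}}$ survive and $\del\omega^2=-\tfrac12\sum_{j=1}^3\eta^{j\overline{j}}\wedge\bigl(\del\eta^4\wedge\overline{\eta}^4-\eta^4\wedge\del\overline{\eta}^4\bigr)$. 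The key observation is that every monomial here contains \emph{exactly one} of $\eta^4,\overline{\eta}^4$. Since $\iota_{\phi'(0)}=\sum_{j\le 3}\phi_j'(0)\,\overline{\eta}^j\wedge\iota_{Z_j}$ only contracts $Z_1,Z_2,Z_3$ and wedges $\overline{\eta}^1,\overline{\eta}^2,\overline{\eta}^3$, it neither creates nor destroys index-$4$ factors, so $\iota_{\phi'(0)}(\del\omega^2)$ still has exactly one of $\eta^4,\overline{\eta}^4$ in every monomial. The outer $\del$ can then act nontrivially only on that factor, replacing $\eta^4$ by $\del\eta^4$ or $\overline{\eta}^4$ by $\del\overline{\eta}^4$ (both on $\{1,2,3\}$), every remaining factor being $\del$-closed. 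Hence $(\del\circ\iota_{\phi'(0)}\circ\del)\omega^2$ is a left-invariant $(3,3)$-form with no index $4$, and the only such form is a scalar multiple of $\eta^{123\overline{123}}$; this scalar is $C_{J,\phi'(0)}$.

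For (2), the abelian condition forces the $(2,0)$-part of $d\eta^4$ to vanish, i.e.\ $\del\eta^4=0$. Then the first summand above disappears and $\del\omega^2=\tfrac12\sum_{j}\eta^{j\overline{j}}\wedge\eta^4\wedge\del\overline{\eta}^4$ consists only of monomials containing $\eta^4$ (and no $\overline{\eta}^4$); as in (1) this property is preserved by $\iota_{\phi'(0)}$. The outer $\del$ can then act nontrivially only on $\eta^4$, producing $\del\eta^4=0$, whence $(\del\circ\iota_{\phi'(0)}\circ\del)\omega^2=0$.

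For (3), I work with invariant forms, which is legitimate because for these nilmanifolds the inclusion of invariant forms induces an isomorphism on Bott--Chern cohomology (see \cite{ConFin,AngKas,Ang}); since $\eta^{123\overline{123}}$ is $d$-closed it defines a class, and $[\eta^{123\overline{123}}]_{BC}=0$ is equivalent to $\eta^{123\overline{123}}\in\im(\del\delbar)$ on invariant $(2,2)$-forms. A bookkeeping argument identical in spirit to (1) shows that, for an invariant $(2,2)$-form $\gamma$, only its $\eta^{4\overline{4}}$-component contributes, so $\del\delbar\gamma=\del\delbar\eta^{4\overline{4}}\wedge\rho$ for a $(1,1)$-form $\rho$ in the variables $\eta^1,\eta^2,\eta^3$ and their conjugates; hence $\im(\del\delbar)=\del\delbar\eta^{4\overline{4}}\wedge\bigwedge^{1,1}\langle\eta^1,\eta^2,\eta^3\rangle$. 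Since $\omega=\tfrac i2\sum_j\eta^{j\overline{j}}$, one has $\del\delbar\eta^{4\overline{4}}=-2i\,\del\delbar\omega$, which vanishes precisely when $\omega$ is SKT. If $\omega$ is SKT the image is $\{0\}$, so $\eta^{123\overline{123}}\notin\im(\del\delbar)$ and $[\eta^{123\overline{123}}]_{BC}\neq 0$. If $\omega$ is not SKT then $\del\delbar\eta^{4\overline{4}}$ is a nonzero $(2,2)$-form supported on $\{1,2,3\}$, and since the wedge pairing onto $\bigwedge^{3,3}\langle\eta^1,\eta^2,\eta^3\rangle=\C\,\eta^{123\overline{123}}$ is non-degenerate, there is a $\rho$ with $\del\delbar(\eta^{4\overline{4}}\wedge\rho)=\eta^{123\overline{123}}$, so $[\eta^{123\overline{123}}]_{BC}=0$. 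This establishes the equivalence.

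The step I expect to be most delicate is the type-and-sign bookkeeping underlying (1): one must verify carefully that contraction by $\phi'(0)$ really preserves the ``exactly one index-$4$ factor'' property and that no $(3,3)$-monomial carrying the index $4$ can survive the final $\del$. In (3) the two points needing justification are the reduction to invariant representatives and the non-degeneracy of the truncated wedge pairing, which together collapse an a priori infinite-dimensional membership question to the single scalar condition $\del\delbar\eta^{4\overline{4}}=0$.
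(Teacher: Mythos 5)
Your proof is correct. Parts (1) and (2) coincide in substance with the paper's proof: the paper likewise isolates the three terms of $\omega^2$ carrying $\eta^{4\overline{4}}$, observes that contraction by $\phi'(0)$ produces $(1,1)$-forms (called $\Omega_1,\Omega_2$ there) free of the index $4$, and concludes that the outer $\del$ replaces the surviving $\eta^4$ or $\overline{\eta}^4$ by forms supported on $\{1,2,3\}$, so the result is a multiple of $\eta^{123\overline{123}}$; in the abelian case the $(2,0)$-part $\del\eta^4$ vanishes and everything dies, exactly as you argue. The genuine difference is in the forward implication of (3). The paper argues via Schweitzer's Hodge theory for Bott--Chern cohomology: since $\ast_g\eta^{123\overline{123}}$ is a multiple of $\eta^{4\overline{4}}$, the SKT hypothesis gives $d\eta^{123\overline{123}}=0$ and $\del\delbar\ast\eta^{123\overline{123}}=0$, so the form is $\Delta_{BC}$-harmonic and its class is automatically nonzero --- no reduction to invariant forms is needed. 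You instead compute the full image of $\del\delbar$ on invariant $(2,2)$-forms (correctly: monomials with at most one index-$4$ factor are $\del\delbar$-closed, so only the $\eta^{4\overline{4}}$-components contribute) and then invoke the fact that invariant forms compute $H_{BC}$ for these nilmanifolds. That appeal is legitimate here because $J$ is nilpotent with structure constants in $\q[i]$, so Dolbeault and hence Bott--Chern cohomology are computed by invariant forms by the cited results; note that your argument only needs injectivity of the invariant Bott--Chern cohomology into the full one, which already follows from averaging (symmetrization) over the compact nilmanifold, without the full isomorphism theorem. Your converse is the same as the paper's, with the small bonus that you justify the existence of the pairing form $\rho$ via non-degeneracy of the wedge pairing $\bigwedge^{2,2}\times\bigwedge^{1,1}\to\bigwedge^{3,3}$ on $\Span_{\C}\langle\eta^1,\eta^2,\eta^3\rangle$, a point the paper merely asserts. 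In short: the paper's route is shorter and independent of invariance theorems but leans on elliptic theory; yours is more elementary and makes the invariant $\del\delbar$-image explicit, which is effectively what gets used again in the later examples.
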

\begin{proof}[Proof]
$\mathit{(1).}$ By simple computations, it holds that
\begin{equation*}
(-2)\omega^2=\eta^{1\overline{1}2\overline{2}}+\eta^{1\overline{1}3\overline{3}}+\eta^{1\overline{1}4\overline{4}}+\eta^{2\overline{2}3\overline{3}}+\eta^{2\overline{2}4\overline{4}}+\eta^{3\overline{3}4\overline{4}}.
\end{equation*}
Let us rewrite structure equations \eqref{eq:struct_eq_def_astheno_0} as
\begin{align*}
\begin{cases}
d\eta^j=0, \quad j\in\{1,2,3\},\vspace{0.2cm}\\
d\eta^4=\sum_{1\leq i<j\leq 3}A_{ij}\eta^{ij}+\sum_{i,j=1}^3B_{i\overline{j}}\eta^{i\overline{j}}.
\end{cases}
\end{align*}
For the sake of completeness, we write
\begin{align*}
\del\eta^4&=\sum_{1\leq i<j\leq 3}A_{ij}\eta^{ij}, & \delbar\eta^4&=\sum_{i,j=1}^3B_{i\overline{j}}\eta^{i\overline{j}}\\
\del\overline{\eta}^4&=-\sum_{i,j=1}^3\overline{B}_{i\overline{j}}\eta^{j\overline{i}}, & \delbar\overline{\eta}^4&=\sum_{1\leq i<j\leq 3}\overline{A}_{ij}\eta^{\overline{ij}}.
\end{align*}
Then, it is easy to see that
\begin{align*}
(-2)\del\omega^2&=(\eta^{1\overline{1}}+\eta^{2\overline{2}}+\eta^{3\overline{3}})\wedge\del\eta^{4\overline{4}}\\
&=(\eta^{1\overline{1}}+\eta^{2\overline{2}}+\eta^{3\overline{3}})\wedge(A_{ij}\eta^{ij\overline{4}}+\overline{B}_{i\overline{j}}\eta^{4j\overline{i}})\\
&=(\eta^{1\overline{1}}+\eta^{2\overline{2}}+\eta^{3\overline{3}})\wedge(A_{ij}\eta^{ij\overline{4}})\\
&+(\eta^{1\overline{1}}+\eta^{2\overline{2}}+\eta^{3\overline{3}})\wedge(\overline{B}_{i\overline{j}}\eta^{4j\overline{i}})
\end{align*}
Now, if $\phi'(0)=\phi_1'(0)\overline{\eta}^1\otimes Z_1+\phi_2'(0)\overline{\eta}^2\otimes Z_2 +\phi_3'(0)\overline{\eta}^3\otimes Z_3$, we have that
\begin{align*}
(-2)(\iota_{\phi'(0)}\circ\del)\omega^2&=(\eta^{1\overline{1}}+\eta^{2\overline{2}}+\eta^{3\overline{3}})\wedge\, \left[A_{ij}\left(\sum_{k=1}^3\phi_k'(0)\,\overline{\eta}^k\wedge i_{Z_k}(\eta^{ij})\right)\right]\wedge\overline{\eta}^4\\
&-(\eta^{1\overline{1}}+\eta^{2\overline{2}}+\eta^{3\overline{3}})\wedge \eta^4\wedge\left[\overline{B}_{i\overline{j}}\left( \sum_{k=1}^3\phi_k'(0)\,\overline{\eta}^k\wedge i_{Z_k}(\eta^{j\overline{i}})\right)\right].
\end{align*}
Note that the $(1,1)$-forms 
\[
\Omega_1:=A_{ij}\left(\sum_{k=1}^3\phi_k'(0)\,\overline{\eta}^k\wedge i_{Z_k}(\eta^{ij})\right)
\]
and
\[
\Omega_2:=\overline{B}_{i\overline{j}}\left( \sum_{k=1}^3\phi_k'(0)\,\overline{\eta}^k\wedge i_{Z_k}(\eta^{j\overline{i}})\right)
\]
do not contain $\eta^4$ nor $\overline{\eta}^4$. Then,
\begin{align*}
(-2)(\del\circ \iota_{\phi'(0)})\circ\del)(\omega^2)&=(\eta^{1\overline{1}}+\eta^{2\overline{2}}+\eta^{3\overline{3}})\wedge\Omega_1\wedge\del\overline{\eta}^4\\
&+(\eta^{1\overline{1}}+\eta^{2\overline{2}}+\eta^{3\overline{3}})\wedge\del\eta^4\wedge\Omega_2\\
&=-(\eta^{1\overline{1}}+\eta^{2\overline{2}}+\eta^{3\overline{3}})\wedge\Omega_1\wedge
\left(\overline{B}_{i\overline{j}}\eta^{j\overline{i}}\right)\\
&+(\eta^{1\overline{1}}+\eta^{2\overline{2}}+\eta^{3\overline{3}})\wedge\left( A_{ij}\eta^{ij}\right)\wedge\Omega_2,
\end{align*}
i.e, the form $\del\circ \iota_{\phi'(0)}\circ \del \omega^2$ is a $(3,3)$-form on $(M,J)$ which does not contain $\eta^4$ nor $\overline{\eta}^4$. Hence, $(\del\circ \iota_{\phi'(0)}\circ \del)\omega^2=C_{J,\phi'(0)}\eta^{123\overline{123}}$, where $C_{J,\phi'(0)}\in \C$ is a constant depending on the the structure equations defining the complex structure $J$ and the derivative of $\phi(t)$ in $t=0$.\vspace{0.3cm}\\
$\mathit{(2).}$ If $J$ is abelian, then $A_{ij}=0$, for every $i,j$ with $1\leq i<j\leq 3$. Then, by the previous point, $\Omega_1=0$ and clearly $(\del\circ \iota_{\phi'(0)}\circ \del)\omega^2=0$.\vspace{0.3cm}\\
$\mathit{(3).}$ We note that since the form $\eta^{123\overline{123}}$ is $d$-closed, the Bott-Chern class $[\eta^{123\overline
{123}}]$ is well defined. Let us assume that the metric $\omega$ is SKT, i.e., $\del\delbar\omega=0$. By structure equations \eqref{eq:struct_eq_def_astheno_0} this is equivalent to $\del\delbar\eta^{4\overline{4}}=0$. S Moreover, since $\del\delbar\ast_g\eta^{123\overline{123}}=\del\delbar\eta^{4\overline{4}}=0$, the form $\eta^{123\overline{123}}$ is Bott-Chern harmonic, hence $[\eta^{123\overline{123}}]_{BC}\neq 0$.\\
Viceversa, let us assume that $\omega$ is not SKT, i.e, $\del\delbar\omega\neq0$, which, by structure equations, is equivalent to $\del\delbar\eta^{4\overline{4}}\neq 0$. Note that $\del\delbar\eta^{4\overline{4}}$ is $(2,2)$-form on $M$, hence $\del\delbar\eta^{4\overline{4}}=\sum_{i<j,k<l}A_{ij\overline{kl}}\eta^{ij\overline{kl}}$, with $i,j,k,\in\{1,2,3\}$. Then, one can choose an invariant $(1,1)$-form $\alpha$ on $M$ such that it does contain $\eta^4$ nor $\overline{\eta}^4$ and $\del\delbar\eta^{4\overline{4}}\wedge\alpha=C\eta^{123\overline{123}}$, for a constant $0\neq C\in\C$. In particular, $\del\alpha=\delbar\alpha=0$. But then,
\begin{equation}
\del\delbar(\frac{1}{C}\eta^{4\overline{4}}\wedge\alpha)=\frac{1}{C}\del\delbar(\eta^{4\overline{4}})\wedge\alpha=\eta^{123\overline{123}},
\end{equation}
which implies that the form $\eta^{123\overline{123}}$ is $\del\delbar$-exact. Therefore $[\eta^{123\overline{123}}]_{BC}= 0$.\vspace{0.3cm}
\end{proof}

\begin{rem}\label{rem:lemma_tecnico_4_dim}
As a consequence of Lemma \ref{lem:tecnico_astheno_def} and Remark \ref{hol_par_astheno}, it turns out that Corollary \ref{cor:def_curve_astheno} can yield obstructions on this family of $4$-dimensional nilmanifolds along the curve of deformations parametrized by $\phi(t)$, if the starting complex structure $J$ is not holomorphically parallelizable nor abelian and the starting diagonal metric on $(M,J)$ is both astheno-K\"ahler and SKT.
\end{rem}
For computations in higher dimensions, it is rather easy to see that, following Remark \ref{hol_par_astheno}, Lemma \ref{lem:tecnico_astheno_def}, and Remark \ref{rem:lemma_tecnico_4_dim}, similar arguments are valid also for any $n$-dimensional nilmanifold $(M,J)$ with left-invariant complex structure $J$ characterized by analogous structure equations, i.e., it holds the following.
\begin{lemma}
Let $(M=\Gamma\backslash G,J)$ be a nilmanifold whose complex structure is determined by a base $\{\eta^1,\dots,\eta^n\}$ of the complexified dual of the Lie algebra $\mathfrak{g}$ of $G$ such that
\begin{equation*}
\begin{cases}
d\eta^i=0, \quad i\in\{1,\dots n-1\},\\
d\eta^n\in\Span_{\C}\{\eta^{ij},\eta^{k\overline{l}}\}, \quad \,\, i,j,k,l\in\{1,\dots, n-1\},
\end{cases}
\end{equation*}
and whose structure constants are elements of $\q[i]$. More specifically, if $\omega=\frac{i}{2}\sum_{j=1}^n\eta^{j\overline{j}}$ is the fundamental form associated to the diagonal metric $g$ and $\phi(t)=\sum_{k=1}^n\phi_{k}(t)\overline{\eta}^k\otimes Z_k$, for $t\in(-\epsilon,\epsilon)$ is a curve of deformations of $(M,J)$, then
\begin{enumerate}
\item $(\del\circ \iota_{\phi'(0)}\circ \del)\omega^{n-2}=C_{J,\phi'(0)}\,\eta^{1\dots n-1\,\overline{1}\dots \overline{n-1}}$.
\item If $J$ is abelian, then $(\del\circ \iota_{\phi'(0)}\circ \del) \omega^{n-2}=0$.
\item The Bott-Chern cohomology class $[\eta^{1\dots n-1\, \overline{1}\dots \overline{n-1}}]\neq 0$ if and only if $\omega$ is SKT.
\end{enumerate}
\end{lemma}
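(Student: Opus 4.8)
The plan is to reproduce, for general $n$, the three-step computation carried out for $n=4$ in the proof of Lemma~\ref{lem:tecnico_astheno_def}, the guiding principle being that every covector except $\eta^n$ and $\overline{\eta}^n$ is $d$-closed. I would first expand the diagonal power as
\[
\omega^{n-2}=\Bigl(\tfrac{i}{2}\Bigr)^{n-2}(n-2)!\sum_{\substack{S\subset\{1,\dots,n\}\\|S|=n-2}}\eta^{S\overline{S}},\qquad\eta^{S\overline{S}}:=\bigwedge_{j\in S}\eta^{j\overline{j}},
\]
and record from the structure equations that $\del\eta^n=\sum_{i<j}A_{ij}\eta^{ij}$ and $\del\overline{\eta}^n=-\sum_{i,j}\overline{B}_{i\overline{j}}\eta^{j\overline{i}}$, both supported on indices $\le n-1$. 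Since $\eta^{j\overline{j}}$ is closed for $j\le n-1$, a monomial $\eta^{S\overline{S}}$ is $\del$-closed unless $n\in S$, so only the terms $\eta^{n\overline{n}}\wedge\eta^{T\overline{T}}$ with $T\subset\{1,\dots,n-1\}$, $|T|=n-3$, contribute, through $\del(\eta^{n\overline{n}})=\del\eta^n\wedge\overline{\eta}^n-\eta^n\wedge\del\overline{\eta}^n$.

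For \emph{(1)} I would feed $\del\omega^{n-2}$ into $\iota_{\phi'(0)}=\sum_{k=1}^{n-1}\phi_k'(0)\,\overline{\eta}^k\wedge\iota_{Z_k}$ (the deformation being taken, as in the examples and in Lemma~\ref{lem:tecnico_astheno_def}, along $Z_1,\dots,Z_{n-1}$) and then into the last $\del$, tracking where $\eta^n$ and $\overline{\eta}^n$ sit. Each $\iota_{Z_k}$, $k\le n-1$, annihilates only $\eta^1,\dots,\eta^{n-1}$, hence leaves untouched the single surviving $\overline{\eta}^n$ (in the $\del\eta^n\wedge\overline{\eta}^n$ piece) or $\eta^n$ (in the $-\eta^n\wedge\del\overline{\eta}^n$ piece), while all other factors are $d$-closed. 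The final $\del$ can therefore only differentiate that surviving covector, turning $\overline{\eta}^n\mapsto\del\overline{\eta}^n$ and $\eta^n\mapsto\del\eta^n$, each of which involves only indices $\le n-1$. Thus $(\del\circ\iota_{\phi'(0)}\circ\del)\omega^{n-2}$ is an invariant $(n-1,n-1)$-form free of $\eta^n$ and $\overline{\eta}^n$, and the only such monomial is $\eta^{1\dots n-1\,\overline{1}\dots\overline{n-1}}$, giving the scalar $C_{J,\phi'(0)}$. The same bookkeeping exhibits $C_{J,\phi'(0)}$ as bilinear in $\{A_{ij}\}$ and $\{\overline{B}_{i\overline{j}}\}$: one factor enters when $\del$ first strikes $\eta^{n\overline{n}}$ and the conjugate type enters at the last $\del$. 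For \emph{(2)}, abelianness forces $A_{ij}=0$, so $\del\eta^n=0$; every surviving summand then has the form $(\text{closed})\wedge\eta^n$, which the final $\del$ kills, giving $(\del\circ\iota_{\phi'(0)}\circ\del)\omega^{n-2}=0$.

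For \emph{(3)} I would invoke the Bott-Chern harmonicity criterion together with $\ast_g\,\eta^{1\dots n-1\,\overline{1}\dots\overline{n-1}}=c\,\eta^{n\overline{n}}$ for a nonzero constant $c$. The $d$-closed form $\eta^{1\dots n-1\,\overline{1}\dots\overline{n-1}}$ is $\Delta_{BC}$-harmonic exactly when $\del\delbar\,\eta^{n\overline{n}}=0$; since $\del\delbar\omega=\tfrac{i}{2}\del\delbar\,\eta^{n\overline{n}}$ (the remaining diagonal summands being closed), this is precisely the SKT condition. If $\omega$ is SKT, the form is a nonzero harmonic representative and by Schweitzer's isomorphism its class is nonzero. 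Conversely, if $\omega$ is not SKT then $\del\delbar\,\eta^{n\overline{n}}\neq0$ is a $(2,2)$-form on indices $\le n-1$; choosing a nonzero component $\eta^{i_0 j_0\,\overline{k_0 l_0}}$ and setting $\alpha:=\eta^{P\overline{Q}}$ with $P=\{1,\dots,n-1\}\setminus\{i_0,j_0\}$ and $Q=\{1,\dots,n-1\}\setminus\{k_0,l_0\}$ — a $d$-closed invariant $(n-3,n-3)$-form — forces $\del\delbar\,\eta^{n\overline{n}}\wedge\alpha=C\,\eta^{1\dots n-1\,\overline{1}\dots\overline{n-1}}$ with $C\neq0$, every other component vanishing by index collision. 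Then $\del\delbar\bigl(\tfrac1C\,\eta^{n\overline{n}}\wedge\alpha\bigr)=\eta^{1\dots n-1\,\overline{1}\dots\overline{n-1}}$, so the class is zero.

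The step I expect to be the main obstacle is the index bookkeeping in \emph{(1)}: one must be sure that $\eta^n$ and $\overline{\eta}^n$ cannot survive all three operators, which hinges on the contraction only involving $Z_1,\dots,Z_{n-1}$ and on the remaining factors being $d$-closed. The subtler point in \emph{(3)} is the explicit construction of the primitive: the complementary form $\alpha$ must be chosen \emph{non-diagonally}, so that its holomorphic and antiholomorphic index sets $P$ and $Q$ complement those of the selected nonzero component of $\del\delbar\,\eta^{n\overline{n}}$, which is exactly what makes $\del\delbar(\tfrac1C\eta^{n\overline{n}}\wedge\alpha)$ single out $\eta^{1\dots n-1\,\overline{1}\dots\overline{n-1}}$.
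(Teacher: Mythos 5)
Your proposal is correct and takes essentially the same route as the paper: the paper offers no separate proof of this lemma, asserting only that it follows by ``similar arguments'' from the four-dimensional case, and your three steps are precisely the $n$-dimensional transcription of the proof of Lemma~\ref{lem:tecnico_astheno_def} — only $\eta^n$ and $\overline{\eta}^n$ fail to be $d$-closed, the contraction along $Z_1,\dots,Z_{n-1}$ preserves exactly one such factor which the final $\del$ must consume, and part (3) combines Schweitzer's harmonic theory ($\ast_g$ of the form being a multiple of $\eta^{n\overline{n}}$) with the explicit $\del\delbar$-primitive $\tfrac{1}{C}\eta^{n\overline{n}}\wedge\alpha$. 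Your decision to read the deformation as running along $Z_1,\dots,Z_{n-1}$ only (despite the statement's sum formally running to $n$) is not just permissible but necessary — a component along $Z_n$ would let $\iota_{Z_n}$ trade $\eta^n$ for $\overline{\eta}^n$ and produce terms quadratic in the $B_{i\overline{j}}$ that survive in the abelian case, breaking part (2) — and it is consistent with the curves actually used in the paper's examples and in Lemma~\ref{lem:tecnico_astheno_def}.
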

As in the $4$-dimensional case, the existence of curves of astheno-K\"ahler metrics along the family of deformations parametrized by $\phi$ could be obstructed by Corollary \ref{cor:def_curve_astheno} if the canonical diagonal metric $g$ is also SKT and the complex structure $J$ is neither abelian nor holomorphically parallelizable.

In the $4$-dimensional example, let us consider an element $(M,J)$ of the family of $4$-dimensional nilmanifolds, with $a_2=a_5=a_6=a_9=a_{10}=a_{11}=a_{12}=0$. In this case, structure equations become
\begin{align}\label{struct_eq_def_astheno_I}
\begin{cases}
d\eta^i&=0,\quad i\in\{1,2,3\},\\
d\eta^4&=a_1\eta^{12}+a_3\eta^{1\overline{1}}+a_4\eta^{1\overline{2}}\\
&\,\,+a_7\eta^{2\overline{1}}+a_8\eta^{2\overline{2}}.
\end{cases}
\end{align}
Note that the manifold $(M,J)$ can be realized as the direct product of a complex $1$-dimensional torus and a $3$-dimensionl nilmanifold.
 
The diagonal metric $g$ with fundamental form $\omega=\frac{i}{2}\sum_{j=1}^4\eta^{j\overline{j}}$ is astheno-K\"ahler if, and only if, it is SKT if, and only if,
\begin{equation}\label{eq:astheno_cond_I}
|a_1|^2+|a_4|^2+|a_7|^2=2\mathfrak{Re}(a_3\overline{a}_8).
\end{equation} 
We then consider the $(0,1)$-vector form $\phi(\mathbf{t})$ on $(M,J)$, $\mathbf{t}\in B$, as in \eqref{eq:vector_def_astheno}. Such a vector form parametrizes a family of deformations $\{(M,J_{\mathbf{t})}\}_{\mathbf{t}\in B}$ of $(M,J)$. Moreover, each $J_{\mathbf{t}}$ is an integrable almost complex structure on $M$ if $\mathbf{t}=(t_1,t_2,t_3)$ satisfies
\begin{equation}\label{eq:def_par_I}
a_1t_1t_2-a_4t_1+a_7t_2=0.
\end{equation}
Let us set $F(t_1,t_2,t_3):=a_1t_1t_2-a_4t_1+a_7t_2$, $(t_1,t_2,_3)\in B$. Then, the gradient of $F$ in $(0,0,0)$ is
\begin{equation}
\nabla F\restrict{(0,0,0)}=\begin{pmatrix}
-a_4\\ a_7 \\ 0
\end{pmatrix}.
\end{equation}
By distinguishing the cases in which either $\nabla F\restrict{(0,0,0)}=0$ or $\nabla F\restrict{(0,0,0)}\neq 0$, we obtain the following.
\subsubsection{Case (i): $\nabla F\restrict{(0,0,0)}=0.$} In this case, it holds that $a_4=a_7=0$. Hence the only non trivial equation of \eqref{struct_eq_def_astheno_I} becomes
\begin{equation*}
d\eta^{4}=a_1\eta^{12}+a_3\eta^{1\overline{1}}+a_8\eta^{2\overline{2}},
\end{equation*}
and let assume that $\omega$ is astheno-K\"ahler (and SKT), i.e., it holds
\begin{equation*}
|a_1|^2=2\mathfrak{Re}(a_3\overline{a}_8).
\end{equation*}
We will assume $a_1\neq 0$.

The solution set of \eqref{eq:def_par_I} is
\begin{equation*}
S=\{(t_1,t_2,t_3)\in B: t_1t_2=0\}.
\end{equation*} 
Therefore, the $(0,1)$-vector form $\phi(\mathbf{t})$ parametrizing the integrable deformations of $M$ is
\[
\phi(\mathbf{t})=t_1\overline{\eta}^1\otimes Z_1+t_2\overline{\eta}^2\otimes Z_2 +t_3\overline{\eta}^3\otimes Z_3, \quad (t_1,t_2,t_3)\in S,
\]
and we can then consider the curve
\begin{equation*}
\phi(t)=t\cdot u_1\overline{\eta}^1\otimes Z_1 +t\cdot u_2\overline{\eta}^2\otimes Z_2 +t\cdot u_3 \overline{\eta}^3\otimes Z_3,\quad  t\in(-\epsilon,\epsilon),
\end{equation*}
for a fixed $(u_1,u_2,u_3)\in S$ and sufficiently small $\epsilon>0$. Hence, 
\begin{equation*}
\phi'(0)=u_1\overline{\eta}^1\otimes Z_1 + u_2\overline{\eta}^2\otimes Z_2 + u_3 \overline{\eta}^3\otimes Z_3.
\end{equation*}
We can then apply our condition and compute
\begin{equation*}
(\del\circ \iota_{\phi'(0)}\circ \del)\omega^2=0,
\end{equation*}
hence neither Corollary \ref{cor:def_curve_astheno} and Theorem \ref{thm:def_curve_astheno} yield obstructions.

\subsubsection{Case (ii): $\nabla F\restrict{(0,0,0)}\neq 0$.}

In this case, it holds that either $a_4\neq 0$ or $a_7\neq 0$. Suppose that $a_4\neq 0$.

Then, structure equations \eqref{struct_eq_def_astheno_I} and the astheno-K\"ahler (and SKT) condition for $\omega$ \eqref{eq:astheno_cond_I} still holds. Then solution set of \eqref{eq:def_par_I} is then
\begin{equation*}
S=\left\{(t_1,t_2,t_3)\in\C^3:\,\, t_1=\frac{a_7t_2}{a_4-a_1t_2},\,\,\,|t_2|<\delta\right\},
\end{equation*}
for $\delta>0$ sufficiently small.

Hence the $(0,1)$-vector form $\phi(\mathbf{t})$ parametrizing the integrable deformations of $(M,J)$ is
\begin{equation*}
\phi(\mathbf{t})=\frac{a_7t_2}{a_4-a_1t_2}\overline{\eta}^1\otimes Z_1 + t_2\overline{\eta}^2\otimes Z_2 +t_3\overline{\eta}^3\otimes Z_3, \quad \left(\frac{a_7t_2}{a_4-a_1t_2},t_2,t_3\right)\in S,
\end{equation*}
so that can consider the curve of deformations
\begin{equation*}
\phi(t)=\frac{ta_7u_2}{a_4-ta_1u_2}\overline{\eta}^1\otimes Z_1 +tu_2\overline{\eta}^2\otimes Z_2 + tu_3\overline{\eta}^3\otimes Z_3,\quad  \left(u_2,u_3\right)\in \C^2,\,\, t\in(-\epsilon,\epsilon),
\end{equation*}
for $\epsilon>0$ sufficiently small, so that
\begin{equation*}
\phi'(0)=\frac{a_7u_2}{a_4}\overline{\eta}^1\otimes Z_1 +u_2\overline{\eta}^2\otimes Z_2+u_3\otimes Z_3.
\end{equation*}
By direct computations, we obtain that
\begin{equation*}
\del\circ \iota_{\phi'(0)}\circ \del \omega^2=2(|a_7|^2-|a_4|^2)\frac{a_1u_2}{a_4}\eta^{123\overline{123}}.
\end{equation*}
Therefore, since the Bott-Chern cohomology class $[\eta^{123\overline{123}}]\neq 0$ by Lemma \ref{lem:tecnico_astheno_def}, condition \eqref{eq_cor_def_curve_astheno} holds if and only if
\begin{equation*}
\left(|a_4|^2-|a_7|^2\right)\mathfrak{Re}\left(\frac{a_1u_2}{a_4}\right)=0.
\end{equation*}
Let us consider the case $a_7\neq 0$. We have that the solution set of \eqref{eq:def_par_I} is
\begin{equation*}
S=\left\{(t_1,t_2,t_3)\in B:\, t_2=\frac{a_4t_1}{a_7+a_1t_1},\,\,\,|t_1|<\delta\right\},
\end{equation*}
for $\delta>0$ sufficiently small.

We can then pick as a curve of deformations
\begin{equation*}
\phi(t)=tu_1\overline{\eta}^1\otimes Z_1+\frac{ta_4u_1}{a_7+ta_1u_1}\overline{\eta}^2\otimes Z_2+ tu_3\overline{\eta}^3\otimes Z_3, \quad \left(u_1, u_3\right)\in \C^2,\,\, t\in(-\epsilon,\epsilon),
\end{equation*}
for $\epsilon>0$ sufficiently small, so that
\begin{equation*}
\phi'(0)=u_1\overline{\eta}^1\otimes Z_1+\frac{a_4u_1}{a_7}\overline{\eta}^2\otimes Z_2 +u_3\overline{\eta}^3\otimes Z_3.
\end{equation*}
Then, by computations similar to the previous case we obtain that
\begin{equation*}
[\mathfrak{Im}(\del\circ \iota_{\phi'(0)}\circ\del (\omega^2))]_{BC}=0\iff \left(|a_4|^2-|a_7|^2\right)\mathfrak{Re}\left(\frac{a_1u_1}{a_7}\right)=0.
\end{equation*}
By applying Corollary \ref{cor:def_curve_astheno} to each case, we obtain the following theorem.
\begin{theorem}
Let $(M=\Gamma\backslash G,J)$ be an element of the family of $4$-dimensional nilmanifolds with complex structure $J$ determined by the covectors $\{\eta^1,\eta^2,\eta^3,\eta^4\}$ on the complexified dual of the Lie algebra $\mathfrak{g}$ of $G$, with structure equations
\begin{align*}
\begin{cases}
d\eta^i=0, \quad i\in\{1,2,3\}\\
d\eta^4=a_1\eta^{12}+a_3\eta^{1\overline{1}}+a_4\eta^{1\overline{2}}+a_7\eta^{2\overline{1}}+a_8\eta^{2\overline{2}},
\end{cases}
\end{align*}
with $a_1,a_3,a_4,a_7,a_8\in\q[i]$. Let $\omega=\frac{i}{2}\sum_{j=1}\eta^{j\overline{j}}$ be the fundamental form associated to the diagonal metric, which we assume to be astheno-K\"ahler, i.e.,
\begin{equation*}
|a_1|^2+|a_4|^2+|a_7|^2=2\mathfrak{Re}(a_3\overline{a}_8).
\end{equation*}
Then,
\begin{itemize}
\item if $a_4\neq 0$ and $(u_2,u_3)\in\C^2$, there exists no curve of astheno-K\"ahler metrics $\omega_t$ such that $\omega_0=\omega$ along the curve of deformations $t\mapsto\phi(t)=\frac{ta_7u_2}{a_4-ta_1u_2}\overline{\eta}^1\otimes Z_1 +tu_2\overline{\eta}^2\otimes Z_2 + tu_3\overline{\eta}^3\otimes Z_3$, for $t\in(-\epsilon,\epsilon)$ if
\begin{equation*}
\left(|a_4|^2-|a_7|^2\right)\mathfrak{Re}\left(\frac{a_1u_2}{a_4}\right)\neq 0.
\end{equation*}
\item if $a_7\neq 0$ and $(u_1,u_3)\in\C^2$, there exists no curve of astheno-K\"ahler metrics $\omega_t$ such that $\omega_0=\omega$ along the curve of deformations $t\mapsto\phi(t)=u_1\overline{\eta}^1\otimes Z_1+\frac{ta_4u_1}{a_7+ta_1u_1}\overline{\eta}^2\otimes Z_2+ tu_3\overline{\eta}^3\otimes Z_3$, for $t\in(-\epsilon,\epsilon)$, if
\begin{equation*}
\left(|a_4|^2-|a_7|^2\right)\mathfrak{Re}\left(\frac{a_1u_1}{a_7}\right)\neq 0.
\end{equation*}
\end{itemize}
\end{theorem}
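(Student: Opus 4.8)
The plan is to read the statement off Theorem~\ref{thm:def_curve_astheno} (and its packaged form, Corollary~\ref{cor:def_curve_astheno}) together with the explicit analysis already carried out in Case~(i) and Case~(ii) above. Both curves $t\mapsto\phi(t)$ appearing in the statement are, by construction, built from solutions of the integrability constraint~\eqref{eq:def_par_I}, so each parametrizes a genuine curve of deformations of $(M,J)$ through the central fiber; differentiating at $t=0$ returns exactly the vector forms $\phi'(0)$ isolated in Case~(ii). Thus the hypotheses of Theorem~\ref{thm:def_curve_astheno} are satisfied and its conclusion applies verbatim along each of these curves, and it remains only to turn that conclusion into the two numerical obstructions.

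First I would reformulate the obstruction in contrapositive form. By Theorem~\ref{thm:def_curve_astheno}, the existence of a smooth curve of astheno-K\"ahler metrics with $\omega_0=\omega$ along $\phi(t)$ forces $2i\,\mathfrak{Im}(\del\circ\iota_{\phi'(0)}\circ\del)(\omega^{2})$ to be $\del\delbar$-exact, that is, $\big[\mathfrak{Im}(\del\circ\iota_{\phi'(0)}\circ\del)(\omega^{2})\big]_{BC}=0$ in $H_{BC}^{3,3}(M)$. Hence it suffices to exhibit a \emph{nonzero} such class under the numerical hypotheses of each bullet, since then no such curve can exist.

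Next I would invoke Lemma~\ref{lem:tecnico_astheno_def}(1) to write $(\del\circ\iota_{\phi'(0)}\circ\del)\omega^{2}=C_{J,\phi'(0)}\,\eta^{123\overline{123}}$ and evaluate the scalar $C_{J,\phi'(0)}$ directly from the structure equations~\eqref{struct_eq_def_astheno_I} and the explicit $\phi'(0)$; this is precisely the computation already yielding $(\del\circ\iota_{\phi'(0)}\circ\del)\omega^{2}=2(|a_{7}|^{2}-|a_{4}|^{2})\frac{a_{1}u_{2}}{a_{4}}\,\eta^{123\overline{123}}$ when $a_4\neq0$, and the analogous expression with factor $a_1u_1/a_7$ when $a_7\neq0$. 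Because $\eta^{123\overline{123}}$ is anti-self-conjugate, $\overline{\eta^{123\overline{123}}}=-\eta^{123\overline{123}}$, the quantity $2i\,\mathfrak{Im}(\del\circ\iota_{\phi'(0)}\circ\del)\omega^{2}=(\del\circ\iota_{\phi'(0)}\circ\del)\omega^{2}-(\delbar\circ\iota_{\overline{\phi'(0)}}\circ\delbar)\omega^{2}$ collapses to $2\,\mathfrak{Re}(C_{J,\phi'(0)})\,\eta^{123\overline{123}}$, a real multiple of $\eta^{123\overline{123}}$ whose coefficient is proportional to $(|a_{4}|^{2}-|a_{7}|^{2})\,\mathfrak{Re}(a_{1}u_{2}/a_{4})$ (respectively $(|a_{4}|^{2}-|a_{7}|^{2})\,\mathfrak{Re}(a_{1}u_{1}/a_{7})$).

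Finally, since the diagonal metric is astheno-K\"ahler, condition~\eqref{eq:astheno_cond_I} makes it simultaneously SKT, so Lemma~\ref{lem:tecnico_astheno_def}(3) guarantees $[\eta^{123\overline{123}}]_{BC}\neq0$; consequently the Bott-Chern class of the imaginary part vanishes exactly when its real scalar factor does. Reading this as a contrapositive in each of the two cases then gives the asserted non-existence whenever the displayed product is nonzero. The only genuinely computational step is the evaluation of $C_{J,\phi'(0)}$, namely carrying the contraction $\iota_{\phi'(0)}$ through $\del\eta^4$ and $\del\overline{\eta}^4$ and tracking which monomials survive into $\eta^{123\overline{123}}$; all cohomological content is supplied by Lemma~\ref{lem:tecnico_astheno_def}(3), and the reduction from the full coefficient $C_{J,\phi'(0)}$ to its real part is forced purely by the anti-self-conjugacy of $\eta^{123\overline{123}}$.
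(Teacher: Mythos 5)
Your proposal is correct and follows essentially the same route as the paper: the curves are exactly those constructed from the integrability constraint \eqref{eq:def_par_I} in Case~(ii), the scalar $C_{J,\phi'(0)}$ is the paper's own computation $2(|a_7|^2-|a_4|^2)\frac{a_1u_2}{a_4}$ (resp.\ the analogue with $a_1u_1/a_7$), the nonvanishing of $[\eta^{123\overline{123}}]_{BC}$ comes from Lemma~\ref{lem:tecnico_astheno_def}(3) together with the fact that \eqref{eq:astheno_cond_I} makes $\omega$ simultaneously SKT, and the conclusion is Corollary~\ref{cor:def_curve_astheno} read in contrapositive. Your explicit justification that anti-self-conjugacy of $\eta^{123\overline{123}}$ reduces the obstruction to the real part of $C_{J,\phi'(0)}$ is a detail the paper leaves implicit, but it is the same argument.
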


\subsection{Example 2}
We now show an application of Corollary \ref{cor:def_curve_astheno} to a different family of $4$-dimensional nilmanifolds with invariant complex structure. Let $(M=\Gamma\backslash G,J)$ be the nilmanifold with complex structure $J$ determined by the base $\{\eta^1,\eta^2,\eta^3,\eta^4\}$ of the complexified dual $\mathfrak{g}_{\C}^*$ of the Lie algebra $\mathfrak{g}$ of $G$, with structure equations
\begin{equation}
\begin{cases}
d\eta^1=0, \vspace{0.1cm}\\ 
d\eta^2=0, \vspace{0.1cm}\\
d\eta^3=a_1\eta^{12}+a_2\eta^{1\overline{1}}+a_3\eta^{1\overline{2}}+a_4\eta^{2\overline{1}}+a_5\eta^{2\overline{2}}\vspace{0.1cm}\\
d\eta^4=b_1\eta^{12}+b_2\eta^{1\overline{1}}+b_3\eta^{1\overline{2}}+b_4\eta^{2\overline{1}}+b_5\eta^{2\overline{2}}
\end{cases}
\end{equation}
with $a_j,b_j\in\q[i]$. As in Example \ref{example1}, the complex structure $J$ is $2$-step nilpotent. However, the manifold $(M,J)$ does not have the structure of a direct product of a $1$-dimensional complex torus and a $3$-dimensional nilmanifold.

Let $\omega=\sum_{j=1}^4\eta^{j\overline{j}}$ be the fundamental form associated to the diagonal metric $g$. By direct computations, it turns out that $g$ is astheno-K\"ahler if and only if
\begin{equation}\label{eq:astheno_cond_II}
\begin{cases}
2\mathfrak{Re}(b_5\overline{b}_2)-|b_1|^2-|b_3|^2-|b_4|^2=0 \vspace{0.1cm}\\
|b_1|^2+|b_3|^2-|b_4|^2-b_5\overline{a}_2-b_2\overline{a}_5=0 \vspace{0.1cm}\\
2\mathfrak{Re}(a_5\overline{a}_2)-|b_1|^2-|b_3|^2-|b_4|^2=0.
\end{cases}
\end{equation}
Since $\del\delbar\omega=0$ if and only if 
\begin{equation*}
\mathfrak{Re}(a_5\overline{a}_2)+\mathfrak{Re}(b_5\overline{b}_2)-|b_1|^2-|b_3|^2-|b_4|^2=0,
\end{equation*}
if the metric $g$ is astheno-K\"ahler, it is also SKT.

Let us consider the family of deformations $\{(M,J_{\mathbf{t}})\}_{t\in B}$ of $(M,J)$ parametrized by the $(0,1)$-vector form
\begin{equation*}
\phi(\mathbf{t})=t_1\overline{\eta}^1\otimes Z_1 + t_2\overline{\eta}^{2}\otimes Z_2,
\end{equation*}
with $\mathbf{t}=(t_1,t_2)\in B:=\{\mathbf{t}\in\C^2:\, |t_j|<1,\,\,\, j\in\{1,2\}\}$. The coframe $\{\eta^1,\eta^2,\eta^3,\eta^4\}$ then changes under $\phi(\mathbf{t})$ as
\begin{equation*}
\begin{cases}
\eta_{\mathbf{t}}^1=\eta^1-t_1\overline{\eta}^1\\
\eta_{\mathbf{t}}^2=\eta^2-t_2\overline{\eta}^2\\
\eta_{\mathbf{t}}^3=\eta^3\\
\eta_{\mathbf{t}}^4=\eta^4
\end{cases}
\end{equation*}
so that, by inverting the system, we obtain
\begin{equation*}
\begin{cases}
\eta^1=\frac{1}{1-|t_1|^2}(\eta_{\mathbf{t}}^1-t_1\overline{\eta}_{\mathbf{t}}^1) \vspace{0.2cm}\\
\eta^2=\frac{1}{1-|t_2|^2}(\eta_{\mathbf{t}}^2-t_2\overline{\eta}_{\mathbf{t}}^2) \vspace{0.2cm}\\
\eta^3=\eta_{\mathbf{t}}^3 \vspace{0.2cm}\\
\eta^4=\eta_{\mathbf{t}}^4.
\end{cases}
\end{equation*}
Set $T_j:=\frac{1}{1-|t_j|^2}$, for $j\in\{1,2\}$.

Since the form $\phi(\mathbf{t})$ defines an family of complex manifolds if and only if $d(\eta_{\mathbf{t}}^j)^{0,2}=0$, for every $j\in\{1,2,3,4\}$, such a integrability condition is satisfied if and only if $(d\eta_{\mathbf{t}}^3)^{0,2}=0$, which yields
\begin{align*}
T_1T_2(a_1t_1t_2-a_3t_1+a_4t_2)\eta_{\mathbf{t}}^{\overline{12}}=0
\end{align*}
and $(d\eta_{\mathbf{t}}^4)^{0,2}=0$, which yields
\begin{align*}
T_1T_2(b_1t_1t_2-b_3t_1+b_4t_2)\eta_{\mathbf{t}}^{\overline{12}}=0.
\end{align*}
Under the assumption that $a_1=b_1$, $a_3=b_3$, and $a_4=b_4$, we have that the condition of integrability is valid for $t\in S$, where $S$ is the solution set of equation
\begin{equation}\label{eq:integrability_t_II}
b_1t_1t_2-b_3t_1+b_4t_2=0, \quad (t_1,t_2)\in B.
\end{equation}
We now proceed as in the previous example, by considering the map
\begin{equation*}
F(t_1,t_2)=b_1t_1t_2-b_3t_1+b_4t_2,
\end{equation*}
and discussing the cases in which either $\nabla F\restrict{(0,0)}=\begin{pmatrix}
-b_3\\ b_4
\end{pmatrix}$ vanishes or not.
\subsubsection{Case (i): $\nabla F\restrict{(0,0)}= 0$}
This is the situation in which $b_3=b_4=0$. Then structure equations become
\begin{equation}
\begin{cases}
d\eta^1=0, \vspace{0.1cm}\\ 
d\eta^2=0, \vspace{0.1cm}\\
d\eta^3=b_1\eta^{12}+a_2\eta^{1\overline{1}}+a_5\eta^{2\overline{2}}\vspace{0.1cm}\\
d\eta^4=b_1\eta^{12}+b_2\eta^{1\overline{1}}+b_5\eta^{2\overline{2}}
\end{cases}
\end{equation}
and the diagonal metric $g$ is astheno-K\"ahler if, and only if, the following condition holds
\begin{align*}
\begin{cases}
2\mathfrak{Re}(b_5\overline{b}_2)=|b_1|^2\vspace{0.1cm}\\
b_2\overline{a}_5+b_5\overline{a}_2=|b_1|^2\vspace{0.1cm}\\
2\mathfrak{Re}(a_5\overline{a}_2)=|b_1|^2
\end{cases}
\end{align*}
We assume that $b_1\neq 0$. The solution set $S$ for equation \eqref{eq:integrability_t_II} is then
\begin{equation*}
S=\{(t_1,t_2)\in B:t_1t_2=0\}
\end{equation*}
Hence as a curve of deformation $\phi(t)$ with $t\in S$ we can choose
\begin{equation*}
\phi(t)=tu_1\overline{\eta}^1\otimes Z_1+tu_2\overline{\eta}^2\otimes Z_2, \quad (u_1,u_2)\in S, \,\, t\in(-\epsilon,\epsilon)
\end{equation*}
for $\epsilon>0$ sufficiently small. Then,
\begin{equation*}
\phi'(0)=u_1\overline{\eta}^1\otimes Z_1+u_2\overline{\eta}^2\otimes Z_2.
\end{equation*}
By computations, however, it turns out that
\begin{equation*}
\del\circ \iota_{\phi'(0)}\circ \del \omega^2=0,
\end{equation*}
hence Corollary \ref{cor:def_curve_astheno} does not yield any obstruction.
\subsubsection{Case (ii): $\nabla F\restrict{(0,0)}\neq 0$.}
In this situation, either $b_3\neq 0$ or $b_4\neq 0$. Let us assume $b_3\neq 0$; the latter case is completely analogous. 

We have the following structure equations
\begin{equation*}
\begin{cases}
d\eta^1=0\\
d\eta^2=0\\
d\eta^3=b_1\eta^{12}+a_2\eta^{1\overline{1}}+b_3\eta^{1\overline{2}}+b_4\eta^{2\overline{1}}+a_5\eta^{2\overline{2}}\\
d\eta^4=b_1\eta^{12}+b_2\eta^{1\overline{1}}+b_3\eta^{1\overline{2}}+b_4\eta^{2\overline{1}}+b_5\eta^{2\overline{2}}
\end{cases}
\end{equation*} 
and the astheno-K\"ahler condition \eqref{eq:astheno_cond_II} on the diagonal metric $g$ is still valid. The solution set $S$ for \eqref{eq:integrability_t_II} is then
\begin{equation*}
S=\left\{(t_1,t_2)\in B\,:\,t_1=\frac{b_4t_2}{b_3-b_1t_2}, \,\,|t_2|<\delta \right\}
\end{equation*}
for $\delta>0$ sufficiently small. Once we fix $u_2\in\C$, we can pick the curve of deformations
\begin{equation*}
\phi(t)=\frac{tb_4u_2}{b_3-tb_1u_2}\overline{\eta}^1\otimes Z_1+tu_2\overline{\eta}^2\otimes Z_2, \quad t\in(-\epsilon,\epsilon),
\end{equation*}
for $\epsilon>0$ sufficiently small, so that
\begin{equation*}
\phi'(0)=\frac{b_4u_2}{b_3}\overline{\eta}^1\otimes Z_1+ u_2\overline{\eta}^2\otimes Z_2.
\end{equation*}
Then, we compute
\begin{align*}
(\del\circ \iota_{\phi'(0)}\circ \del)\omega^2=((|b_3|^2-|b_4|^2)\frac{b_1}{b_4}u_2)\eta^{123\overline{123}}+((|b_4|^2-|b_3|^2)\frac{b_1}{b_4}u_2)\eta^{123\overline{124}}\\
+((|b_4|^2-|b_3|^2)\frac{b_1}{b_4}u_2)\eta^{124\overline{123}}+((|b_3|^2-|b_4|^2)\frac{b_1}{b_4}u_2)\eta^{124\overline{124}}
\end{align*}
Now, the forms $\eta^{123\overline{123}}, \eta^{123\overline{124}},\eta^{124\overline{123}},\eta^{124\overline{124}}$ are all $d$-closed by structure equations. Moreover, by considering the $\C$-antilinear $\ast$-operator with respect to $g$, we see that
\begin{align*}
\del\delbar\ast\eta^{123\overline{123}}=0 &\iff 2\mathfrak{Re}(b_5\overline{b}_2)-|b_1|^2-|b_3|^2-|b_4|^2=0\\
\del\delbar\ast\eta^{123\overline{124}}=0 &\iff |b_1|^2+|b_3|^2+|b_4|^2-b_5\overline{a}_2-b_2\overline{a}_5=0\\
\del\delbar\ast\eta^{124\overline{123}}=0 &\iff |b_1|^2+|b_3|^2+|b_4|^2-a_2\overline{b}_5-a_5\overline{b}_2=0\\
\del\delbar\ast\eta^{124\overline{124}}=0&\iff 2\mathfrak{Re}(a_5\overline{a}_2)-|b_1|^2-|b_3|^2-|b_4|^2=0
\end{align*}
Since $\omega$ is astheno-K\"ahler, i.e., conditions \eqref{eq:astheno_cond_II} hold, hence $\del\delbar\ast\eta^{123\overline{123}}=\del\delbar\ast\eta^{123\overline{124}}=\del\delbar\ast\eta^{124\overline{123}}=\del\delbar\ast\eta^{124\overline{124}}=0$, i.e., the forms $\eta^{123\overline{123}}$, $\eta^{123\overline{124}}$, $\eta^{124\overline{123}}$, $\eta^{124\overline{124}}$. Therefore, 
\begin{align*}
[\mathfrak{Im}(\del\circ \iota_{\phi'(0)}\circ \del \omega^2)]_{BC}=&-i(|b_3|^2-|b_4|^2)\mathfrak{Re}(\frac{b_1u_2}{b_4})[\eta^{123\overline{123}}]_{BC}+i(|b_3|^2-|b_4|^2)\mathfrak{Re}(\frac{b_1u_2}{b_4})[	\eta^{123\overline{124}}]_{BC}\\
&+i(|b_3|^2-|b_4|^2)\mathfrak{Re}(\frac{b_1u_2}{b_4})[\eta^{124\overline{123}}]_{BC}-i(|b_3|^2-|b_4|^2)\mathfrak{Re}(\frac{b_1u_2}{b_4})[\eta^{124\overline{124}}]_{BC}
\end{align*}
which vanishes in $H_{BC}^{3,3}(M)$ if, and only if,
\begin{equation}
(|b_3|^2-|b_4|^2)\mathfrak{Re}(\frac{b_1u_2}{b_4})=0. 
\end{equation} 
We summarize what we obtained in the following theorem.
\begin{theorem}
Let $(M,J)$ be an element of the family of $4$-dimensional manifolds determined by the coframe of left-invariant $(1,0)$-forms $\{\eta^1,\eta^2,\eta^3,\eta^4\}$ with structure equations
\begin{equation*}
\begin{cases}
d\eta^1=0\\
d\eta^2=0\\
d\eta^3=b_1\eta^{12}+a_2\eta^{1\overline{1}}+b_3\eta^{1\overline{2}}+b_4\eta^{2\overline{1}}+a_5\eta^{2\overline{2}}\\
d\eta^4=b_1\eta^{12}+b_2\eta^{1\overline{1}}+b_3\eta^{1\overline{2}}+b_4\eta^{2\overline{1}}+a_5\eta^{2\overline{2}},
\end{cases}
\end{equation*} 
with $a_2,a_5,b_1,b_2,b_3,b_4,b_5\in\q[i]$. Let $\omega=\frac{i}{2}\sum_{j=1}^4\eta^{j\overline{j}}$ be the fundamental form associated to the diagonal metric $g$ and suppose that $g$ is astheno-K\"ahler (and hence SKT), i.e.,
\begin{equation*}
\begin{cases}
2\mathfrak{Re}(b_5\overline{b}_2)-|b_1|^2-|b_3|^2-|b_4|^2=0 \vspace{0.1cm}\\
|b_1|^2+|b_3|^2-|b_4|^2-b_5\overline{a}_2-b_2\overline{a}_5=0 \vspace{0.1cm}\\
2\mathfrak{Re}(a_5\overline{a}_2)-|b_1|^2-|b_3|^2-|b_4|^2=0.
\end{cases}
\end{equation*}
Then,
\begin{itemize}
\item if $b_3\neq 0$ and $u_2\in \C$, there exists no curve of astheno-K\"ahler metrics $\omega_t$ with $\omega_0=\omega$ along the curve of deformations $\phi(t)=\frac{tb_4u_2}{b_3-tb_1u_2}\overline{\eta}^1\otimes Z_1+tu_2\overline{\eta}^2\otimes Z_2$, $t\in(-\epsilon,\epsilon)$, if
\begin{equation*}
(|b_3|^2-|b_4|^2)\mathfrak{Re}(\frac{b_1u_2}{b_4})\neq 0.
\end{equation*}
\item if $b_4\neq 0$ and $u_1\in \C$, there exists no curve of astheno-K\"ahler metrics $\omega_t$ with $\omega_0=\omega$ along the curve of deformations $\phi(t)=tu_1\overline{\eta}^1\otimes Z_1+t\frac{tb_3u_1}{tb_1u_1+b_4}\overline{\eta}^2\otimes Z_2$, $t\in(-\epsilon,\epsilon)$, if
\begin{equation*}
(|b_3|^2-|b_4|^2)\mathfrak{Re}(\frac{b_1u_1}{b_3})\neq 0.
\end{equation*}
\end{itemize}
\end{theorem}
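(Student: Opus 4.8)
The plan is to read the statement as the contrapositive of the necessary condition supplied by Theorem~\ref{thm:def_curve_astheno}. If a curve $\{\omega_t\}$ of astheno-K\"ahler metrics with $\omega_0=\omega$ existed along the prescribed deformation, then \eqref{eq_thm_def_curve_astheno} would give $2i\mathfrak{Im}(\del\circ\iota_{\phi'(0)}\circ\del)(\omega^2)=\del\delbar(\omega^2(0))'$. Since the right-hand side is $\del\delbar$-exact, the Bott-Chern class $[\mathfrak{Im}(\del\circ\iota_{\phi'(0)}\circ\del)(\omega^2)]_{BC}\in H^{3,3}_{BC}(M)$ would have to vanish. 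Thus it is enough to prove that, under the stated inequality, this class is nonzero; its non-vanishing then rules out any such curve.

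First I would differentiate the prescribed curve of $(0,1)$-vector forms at $t=0$, which in the case $b_3\neq0$ gives $\phi'(0)=\tfrac{b_4u_2}{b_3}\overline{\eta}^1\otimes Z_1+u_2\overline{\eta}^2\otimes Z_2$, and the symmetric expression in the case $b_4\neq0$. Next I would evaluate the $(3,3)$-form $(\del\circ\iota_{\phi'(0)}\circ\del)(\omega^2)$ directly from the structure equations, using that $d\eta^1=d\eta^2=0$ and that $\omega^2$ is a sum of basic $(2,2)$-forms $\eta^{i\overline{i}j\overline{j}}$. As in the computation carried out just before the statement, the result is a linear combination of the four $d$-closed forms $\eta^{123\overline{123}}$, $\eta^{123\overline{124}}$, $\eta^{124\overline{123}}$, $\eta^{124\overline{124}}$ whose coefficients are each proportional, up to sign, to $(|b_3|^2-|b_4|^2)\tfrac{b_1}{b_4}u_2$; passing to the imaginary part isolates the factor $(|b_3|^2-|b_4|^2)\mathfrak{Re}(\tfrac{b_1u_2}{b_4})$.

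To promote this scalar factor to a genuine obstruction I would show the four forms represent linearly independent nonzero Bott-Chern classes. By the characterization recalled in Section~\ref{sec:prel}, a form $\alpha$ is Bott-Chern harmonic precisely when $d\alpha=0$ and $\del\delbar\ast\alpha=0$; computing $\del\delbar\ast$ of each of the four $(3,3)$-forms produces exactly the scalar identities making up the astheno-K\"ahler system \eqref{eq:astheno_cond_II}. Since $\omega$ is assumed astheno-K\"ahler, all four forms are therefore Bott-Chern harmonic, hence (being distinct harmonic basis forms) determine linearly independent classes under the isomorphism $\mathcal{H}^{3,3}_{\Delta_{BC}}(M)\xrightarrow{\sim}H^{3,3}_{BC}(M)$. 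Consequently $[\mathfrak{Im}(\del\circ\iota_{\phi'(0)}\circ\del)(\omega^2)]_{BC}$ vanishes if and only if $(|b_3|^2-|b_4|^2)\mathfrak{Re}(\tfrac{b_1u_2}{b_4})=0$, and the contrapositive is precisely the asserted obstruction; the case $b_4\neq0$ follows by the same argument after exchanging the indices $1$ and $2$.

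The main obstacle I expect is purely computational: carrying $\iota_{\phi'(0)}$ and the operators $\del,\delbar$ through $\omega^2$ while tracking signs, and checking that the four Hodge duals $\ast\eta^{123\overline{123}}$, $\ast\eta^{123\overline{124}}$, $\ast\eta^{124\overline{123}}$, $\ast\eta^{124\overline{124}}$ feed back, under $\del\delbar$, into the three equations \eqref{eq:astheno_cond_II} rather than into some strictly larger set of conditions. The conceptual input is minimal --- Theorem~\ref{thm:def_curve_astheno} together with the harmonic model of $H^{3,3}_{BC}(M)$ --- so the work lies entirely in organizing these contractions and duals so that the astheno-K\"ahler hypotheses are seen to guarantee the simultaneous non-triviality of all four classes.
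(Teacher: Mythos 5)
Your proposal is correct and matches the paper's own argument essentially step for step: the paper likewise invokes Corollary~\ref{cor:def_curve_astheno} as the necessary condition, computes $\phi'(0)$ and $(\del\circ\iota_{\phi'(0)}\circ\del)(\omega^2)$ as a combination of $\eta^{123\overline{123}}$, $\eta^{123\overline{124}}$, $\eta^{124\overline{123}}$, $\eta^{124\overline{124}}$, and verifies via the criterion $d\alpha=0$, $\del\delbar\ast\alpha=0$ that the astheno-K\"ahler hypotheses \eqref{eq:astheno_cond_II} make all four forms Bott-Chern harmonic, so the class of the imaginary part vanishes iff $(|b_3|^2-|b_4|^2)\mathfrak{Re}(b_1u_2/b_4)=0$, with the $b_4\neq0$ case handled symmetrically. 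No genuinely different decomposition or lemma is involved, so there is nothing further to compare.
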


\end{document}